\documentclass[12pt]{amsart}
\usepackage{amsmath, amssymb}
\usepackage{amssymb}
\usepackage{latexsym}
\usepackage{amscd}
\usepackage{amsthm}
\usepackage{graphicx}

\textheight=250mm \topmargin=-15mm \oddsidemargin=0mm
\evensidemargin=0mm \textwidth=160mm
\parindent=0cm
\parskip=2mm

\theoremstyle{plain}
\newtheorem{thm}{Theorem}
\newtheorem{cor}{Corollary}
\newtheorem{lem}{Lemma}
\newtheorem{prop}{Proposition}

\theoremstyle{definition}


\newcommand{\inter}{\textrm{int}}

\newcommand{\T}{\mathbb{T}}

\newcommand{\R}{\mathbb{R}}

\newcommand{\eps}{\varepsilon}

\begin{document}
\title{Analytic skew products of quadratic polynomials over circle expanding maps}
\author{Wen Huang, Weixiao Shen}
\address{Department of Mathematics, University of Science and Technology of China,
Hefei, Anhui, 230026, P.R. China}
\address{Department of
Mathematics, National University of Singapore, Singapore}
\email{wenh@mail.ustc.edu.cn,\, matsw@nus.edu.sg}

\subjclass[2000]{37C40,34C28,34D08}
\maketitle
\begin{abstract}
We prove that a Viana map with an arbitrarily non-constant real
analytic coupling function admits two positive Lyapunov exponents
almost everywhere.
\end{abstract}
\section{Introduction}
We study skew products of quadratic maps driven by expanding circle
maps with real-analytic coupling functions.
 Let $f(x)=a-x^2$,
$a\in (1,2)$ be a quadratic map for which $x=0$ is a strictly
pre-periodic point, let $d\ge 2$ be an integer and let $\phi:\T\to
\R$ be a non-constant real-analytic function, where
$\T=\mathbb{R}/\mathbb{Z}$.
For $\alpha>0$, define $F=F_\alpha: \T\times\R \to \T\times\R$ as
$$F(\theta, x)= (d\theta, f(x)+\alpha\phi(\theta)).$$

These kind of maps, nowadays called {\em Viana maps}, were first
studied in \cite{V}, where Viana proved that for $d\ge 16,$
$\phi(\theta)=\sin (2\pi \theta)$, if $\alpha$ is small enough, then
$F_\alpha$ has two positive Lyapunov exponents  Lebesgue almost
everywhere. Furthermore, he showed that the same conclusion holds
for any small $C^3$ perturbation of $F$. In \cite{BST},
Buzzi-Sester-Tsujii extended the result to the case $d\ge 2$ (for
the same coupling function $\phi$). The non-integer case of $d$ was considered in \cite{Sch}.
See also~\cite{Sch2, GS} for results on skew products driven by certain quadratic polynomials. These maps serve as typical examples of multi-dimensional non-uniformly
expanding dynamical systems for which a general theory have been
developed, see the excellent review \cite{Alves}.

An important feature of the maps $F$ considered in the papers cited
above is that they display partial hyperbolicity: the maps are
uniformly expanding in the horizontal direction and the horizontal
expansion dominates the vertical expansion. The property implies
that images of horizontal curves are nearly horizontal. The
particular form of the coupling function $\phi$ allows the authors
to conclude that high iterates of a horizontal curve are non-flat,
which is an important technical point in the proof.

The goal of this paper is to weaken the assumption on the coupling
function.
\begin{thm}\label{thm:main} Fix $f, g, \phi$ as above.
If $\alpha$ is small enough, then  $F$ has two positive Lyapunov
exponents at Lebesgue almost every point in $\T\times \R$. Moreover,
the same holds for any small $C^\infty$ perturbation of $F$.
\end{thm}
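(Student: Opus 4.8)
Because $F$ is a skew product, the differential $DF^{n}_{(\theta,x)}$ is lower triangular, with diagonal entries $d^{n}$ and $P_{n}(\theta,x):=\prod_{j=0}^{n-1}(-2x_{j})$, where we write $(\theta_{j},x_{j})=F^{j}(\theta,x)$. Consequently every tangent vector with non-zero horizontal component has forward expansion rate at least $\log d>0$, while a vertical vector has expansion rate $\tfrac1n\log|P_{n}|$; the theorem is thus equivalent to
$$\liminf_{n\to\infty}\frac1n\sum_{j=0}^{n-1}\log|2x_{j}|>0\qquad\text{for Lebesgue a.e. }(\theta,x)\in\T\times\R.$$
For $\alpha$ small there is a compact interval $I_{0}$ with $\T\times I_{0}$ forward invariant; every orbit either escapes to infinity --- where both exponents are trivially positive --- or eventually enters $\T\times I_{0}$, so it suffices to treat Lebesgue a.e. point of $\T\times I_{0}$. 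On this region one follows the scheme of \cite{V} and \cite{BST}. Introduce the class of \emph{admissible curves}: nearly horizontal graphs $x=X(\theta)$ over $\theta$-intervals, with uniformly small slope and controlled curvature, lying in $\T\times I_{0}$. Using the partial hyperbolicity of $F$ (uniform horizontal expansion by $d$, dominating the vertical dynamics when $\alpha$ is small), one shows this class is forward invariant with bounded distortion. One then studies \emph{returns} of an admissible curve $\Gamma$ to the critical strip $\{|x|<\delta\}$: a return of depth $r$, where the curve reaches $|x|\asymp\delta e^{-r}$, is followed by a binding period of length comparable to $r$ during which the iterated curve shadows the finite postcritical orbit of $f$ (which, $f$ being Misiurewicz, lies in $I_{0}$ at a definite distance from $0$) and recovers the expansion lost at the return. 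A large deviation estimate for the sequence of return depths along $\Gamma$ then shows that the set of points of $\T\times I_{0}$ whose vertical Birkhoff sums $\sum_{j<n}\log|2x_{j}|$ fail to grow linearly is Lebesgue-null, which is the displayed claim.

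The single place in \cite{V,BST} where the special form $\phi(\theta)=\sin(2\pi\theta)$ is used is in bounding, on an admissible curve, the measure of the portion that performs a deep return; this requires the iterated curves to be \emph{non-flat}, that is, to meet horizontal lines with a uniformly bounded order of contact. The new observation is that this already follows from $\phi$ being real-analytic and non-constant. Indeed, if the derivatives $\phi',\phi'',\dots$ had a common zero $\theta_{0}$ then $\phi$ would be constant, so by analyticity and compactness of $\T$ there exist a finite $k$ and a $c>0$ with
$$\sum_{i=1}^{k}\bigl|\phi^{(i)}(\theta)\bigr|\ge c\qquad(\theta\in\T).$$
Since the image under $F$ of a horizontal curve is, up to the affine change $\theta\mapsto(\theta+j)/d$ and a vertical scaling by $\alpha$, the graph of $\phi$, this bound makes $F(\Gamma_{0})$ transverse to horizontal lines to order $k$. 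The main obstacle --- and the technical core of the paper --- is to propagate an order-$k$ transversality, with $k$ and $c$ uniform, through \emph{all} iterates of admissible curves. The delicate point is that a fold, i.e.\ the passage of a piece of curve through the critical strip followed by application of $f(x)=a-x^{2}$, squares --- hence a priori doubles --- the order of contact at the folding point. This is dealt with by splitting admissible curves at the preimages of the critical points, so that each resulting piece is monotone near the fold while the order of contact genuinely introduced by the coupling $\phi$ remains at most $k$ --- a statement one establishes by combining the analyticity of $\phi$ and $f$ with a renormalization and compactness argument on the space of admissible curves. Granting this uniform non-flatness, the measure estimate for deep returns, and hence the large deviation bound, goes through as in the model case.

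Finally, the assertion about $C^{\infty}$ perturbations is essentially automatic once the proof is organized as above. The curve-geometry estimates depend only on uniform horizontal expansion and on bounds for finitely many derivatives, which are open conditions; the finite-order non-flatness enters only via the quantitative estimate above on a horizontal curve and its first image, and this is a condition on finitely many derivatives with a strictly positive lower bound, hence survives any $C^{k}$-small --- in particular $C^{\infty}$-small --- perturbation of $F$ (with $c$ replaced by, say, $c/2$); and the recovery mechanism near the postcritical set is governed by a compact uniformly hyperbolic invariant set of $F$, which persists under $C^{1}$-small perturbations. As all the remaining estimates use only these ingredients, they carry over verbatim to every sufficiently small $C^{\infty}$ perturbation of $F$.
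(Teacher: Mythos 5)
Your high-level picture --- admissible curves, non-flatness from analyticity via compactness, deep-return measure bound, large deviation --- matches the paper's, and your reduction to positivity of $\liminf \frac1n\sum_{j<n}\log|2x_j|$ is correct. But there are two points where the proposal goes astray, and the second one is a genuine gap.

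On the non-flatness mechanism: your worry that a fold ``squares --- hence a priori doubles --- the order of contact,'' and the proposed fix of splitting the curve at preimages of critical points, are not what the paper has to deal with. Admissible curves are graphs $x=X(\theta)$ of slope $O(\alpha)$; pushing one forward gives $Y'(\theta)= -2X(\tau\theta)X'(\tau\theta)/d + \alpha\phi'(\tau\theta)/d$, which is again a graph. There is no fold in the graph sense and no order-doubling. What does happen is that the vertical dynamics contributes at the \emph{same} order $\alpha$ as the coupling, and the real task is to show this contamination cannot flatten the curve. Domination saves this: the contribution from the $k$-th earlier iterate is suppressed by $(R_1/d)^k$, so $Y_n'/\alpha$ lives (up to $O(\alpha)$ errors) in the concrete compact family $\mathcal{T}_\phi$ of series $\hat\phi'(\frac{\theta+k_1}{d})+\sum a_n\hat\phi'(\frac{\theta+k_n}{d^{n+1}})$, $|a_n|\le C_1(R_1/d)^n$; the essential step (Lemma~\ref{lem:classT}) is that $0\notin\mathcal{T}_\phi$. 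Your pointwise bound $\sum_{i\le k}|\phi^{(i)}|\ge c$ is true but not directly enough: you need it for every $T\in\mathcal{T}_\phi$, not just for $\phi'$, and that is exactly what the compactness of $\mathcal{T}_\phi$ buys. So your compactness instinct is right, but the class you must renormalize within is the class of \emph{derivatives} $\mathcal{T}_\phi$, not a class of curve-pieces split at critical preimages.

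The genuine gap is your statement that ``the measure estimate for deep returns, and hence the large deviation bound, goes through as in the model case.'' The paper says explicitly in the introduction that it does not: for $\phi=\sin(2\pi\theta)$ the iterated admissible curve is separated from itself on a definite fraction of each partition element (Viana's Lemma 2.6, Buzzi--Sester--Tsujii's Proposition 5.2), and this is what controls recurrence to the critical strip; for a general analytic $\phi$ the separation property fails. The substitute is Proposition~\ref{prop:return}: on each return-time partition element, at least one third of the points do not return within $N(\alpha)+M$ more steps. Its proof compares $Y_{N_1}(\theta)$ with $Y_{N_1}(\theta+d^{-N_1})$ and uses the Fourier expansion of $\phi$ to show the leading term $Q_{N_1}$ is not $d^{-N_0}$-periodic, giving separation on a set of measure $\ge 0.99$. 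This weaker recurrence estimate then feeds into a modified counting/large deviation scheme (Corollary~\ref{cor:nklarge}, Proposition~\ref{prop:hatd}, Lemma~\ref{lemma:8}). Without noticing that the separation estimate needs a replacement, the sketch stalls precisely at the step you describe as routine.
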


We shall only deal with the unperturbed case, as the perturbed case
follows by the same strategy in \cite{V}. As in the previous works,
we analyze return of an ``{\em admissible curve}'' (or {\em a
sufficiently high iterate of a horizontal curve}) to small
neighborhoods of the critical circle $x=0$. The difference here from
\cite{BST} is that for general $\phi$, the images of an admissible
curve are not necessarily separated from each other on a definite
part, so that a key estimate (Lemma 2.6 in \cite{V} and Proposition
5.2 in \cite{BST}) only holds in a weaker form. Nevertheless, we
shall prove that most points in an admissible curve cannot return
too close to the critical circle before displaying some (weak)
expansion, see Proposition~\ref{prop:return}. The real-analyticity
of the coupling function is essentially used in
\S~\ref{subsec:comega} to obtain non-flatness of admissible curves.
It is not clear to us whether the same result holds for smooth
coupling functions $\phi$ with non-flat critical points.

Let us mention a few consequences of our theorem. Since we do not
obtain exactly the same underlying estimates as in \cite{V}, the
proof in \cite{A} does not apply directly in our case to show
existence of SRB measures. However, our result shows that $F$ (and
any small $C^\infty$ perturbation) satisfies the assumption of
\cite{ABV}, from which we conclude that $F$ admits finitely many
acip's. Moreover, a slight modification of \cite{AV} shows that $F$
is ergodic with respect to the Lebesgue
measure. Thus, in our case, $F$ and its small $C^\infty$
perturbation admit a unique acip.

{\em Note.} Let $f_n:\T\times \mathbb{R}\to \mathbb{R}$ be defined
so that
$$F^n(\theta, x)=(d^n \theta, f_n(\theta, x)).$$

Choose $\beta$ slightly smaller than $|p_1|$, where
$p_1=\frac{-1-\sqrt{1+4a}}{2}$ is the orientation-preserving fixed
point of $f$. Then $1<\beta < 2$ and the interval $B=[-\beta,
\beta]$ satisfies: $f(B)\subset \inter (B)$ and $|f_a'(x)|\ge
2\beta$ for $x\in \mathbb{R}\setminus \text{int}(B)$. Let
$I=\T\times B$. Then provided that $\alpha>0$ is small enough, the
following hold:
\begin{itemize}

\item $F(I)\subset \inter (I)$.

\item $|\frac{ \partial f_1(\theta,x)}{\partial x}|>\beta>1$ outside of $I$.

\item $|\frac{ \partial f_1(\theta,x)}{\partial x}|\le 2\beta\le 4$
on $I$.
\end{itemize}

{\em Note.} Unless otherwise stated, all constants appearing below
depend on $f$, $d$ and $\phi$. Dependence of constants on $\alpha$
will be stated explicitly.

Without loss of generality, we assume $|\phi(\theta)|\le 1$ for all
$\theta\in \T$.
\section{Preliminaries}
\subsection{Domination}
An important feature of the map $F$ is that the horizontal expansion
dominates the vertical expansion.

\begin{lem}\label{lem:lyapunov}
There exists $C_1>0$,  $R_1\in (0,2)$ and $n_0\in \mathbb{N}$ such
that, when $\alpha$ is sufficiently small, for all $(\theta,x)\in
I$, we have
$$\left|\frac{\partial f_k(\theta,x)}{\partial x}\right|\le
C_1 R_1^k \text{ for }k\in \mathbb{N} \text{ and }
\left|\frac{\partial f_k(\theta,x)}{\partial x}\right|\le  R_1^k \le
\frac{d^k}{6}\text{ for }k\ge n_0.$$
\end{lem}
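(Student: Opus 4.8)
The plan is to control $\partial_x f_k$ on $I$ by combining the crude bound $|\partial_x f_1|\le 2\beta\le 4$ (from the first Note) with the fact that $f$ is a Misiurewicz quadratic map, so that $0$ is strictly preperiodic and the postcritical orbit of $f$ avoids a neighbourhood of $0$ with uniformly expanding derivative along it. Concretely, one first establishes the statement for $\alpha=0$, where $F_0(\theta,x)=(d\theta, f(x))$ and hence $\partial_x f_k(\theta,x)=(f^k)'(x)$ is independent of $\theta$. For a Misiurewicz map there are standard Collet--Eckmann/Mañé-type estimates: there exist $C>0$ and $\lambda>1$ such that $|(f^k)'(x)|\ge C\lambda^k$ for all $x$ whose orbit stays in $B$ up to time $k$ — but here we want an \emph{upper} bound, which is easier. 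Since $f$ maps $B$ into its interior and $|f'|\le 2\beta$ on $B$, the trivial bound $|(f^k)'(x)|\le (2\beta)^k$ already holds; the point of the lemma is to get a rate $R_1<2$ (indeed with $R_1^k\le d^k/6$ eventually), so a submultiplicative-subsequence argument is needed rather than the naive product bound.

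The key step is therefore: \textbf{(1)} show that $\sup_{x\in B}|(f^k)'(x)|$ grows at a rate strictly less than $2$. This follows from the Misiurewicz condition because the critical value $f(0)=a$ lies on a preperiodic orbit that is hyperbolic repelling, so iterating from a point near $0$ the derivative picks up a bounded factor near $0$ but then expands at the rate of that repelling cycle, which is strictly between $1$ and $2$; more robustly, one invokes the fact (e.g. via Mañé's theorem, or a direct computation using that $f$ has no attracting or neutral cycles and a preperiodic critical point) that the Lyapunov exponent of $f$ with respect to Lebesgue — or simply the growth rate $\limsup_k \frac1k\log\sup_{x\in B}|(f^k)'(x)|$ — is some $\log R_1^0$ with $R_1^0<2$. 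Fix $R_1\in(R_1^0,2)$ with additionally $R_1<d^{1/1}\cdot$(something) so that $R_1<d$ and $R_1^k\le d^k/6$ for all large $k$ (possible since $R_1<2\le$ we only need $R_1<d$; if $d=2$ one needs $R_1<2$, which we have, and then $R_1^k/d^k\to 0$). Then $\sup_{x\in B}|(f^k)'(x)|\le C_1' R_1^k$ for a suitable $C_1'$ and all $k$, which gives the $\alpha=0$ case; choosing $n_0$ with $C_1' R_1^{n_0}\le R_1^{n_0}$ — rather, absorbing the constant, there is $n_0$ with $\sup|(f^k)'|\le R_1^k\le d^k/6$ for $k\ge n_0$.

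Then \textbf{(2)} perturb in $\alpha$: write $\partial_x f_k(\theta,x)=\prod_{j=0}^{k-1}\partial_x f_1(F^j(\theta,x))$ and use that $\partial_x f_1(\theta,x)=f'(x)=-2x$ is independent of $\phi$, so in fact $\partial_x f_k(\theta,x)=\prod_{j=0}^{k-1} f'(x_j)$ where $x_j$ is the second coordinate of $F^j(\theta,x)$. The difference from the $\alpha=0$ case is only that the points $x_j$ now depend on $\theta$ and $\alpha$; since $F(I)\subset\inter(I)$, all $x_j\in B$, and as $\alpha\to0$ the orbit $(x_j)_{j<k}$ converges (uniformly on $I$, for each fixed $k$) to the $f$-orbit of $x_0$. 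A compactness/continuity argument on a finite time window of length $n_0$ (or rather: choose a large $N$ with $\sup_{x\in B}|(f^N)'(x)|\le (R_1/R_1')^{-N}\cdots$), combined with the semigroup property $\partial_x f_{k}=\partial_x f_{k-mN}(F^{mN}(\cdot))\cdot\prod \partial_x f_N(\cdots)$, upgrades the $\alpha=0$ bound to all small $\alpha$ with the same $R_1$ (slightly enlarged) at the cost of enlarging $C_1$. The main obstacle is step (1): one must genuinely use the Misiurewicz property of $f$ to beat the trivial rate $2\beta$ — equivalently, to know the growth rate of $\sup_B|(f^k)'|$ is $<2$ — and then ensure compatibility with $d$ (which is only a real issue when $d=2$, where one needs $R_1<2$, exactly what the Misiurewicz estimate provides). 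The rest is a routine perturbation/telescoping argument.
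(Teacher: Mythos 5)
Your overall architecture --- first bound $\sup_{x\in B}|(f^k)'(x)|\le C R_*^k$ with some $R_*<2$ for the unperturbed map, then pass to small $\alpha$ by continuity on a fixed time window plus submultiplicativity, and finally absorb the constant to choose $R_1\in(R_*,2)$ and $n_0$ with $C R_*^{n_0}\le R_1^{n_0}\le d^{n_0}/6$ --- is exactly the shape of the paper's argument. In fact the paper does not prove the key estimate either: it quotes Lemma 3.1 of [BST] for the bound $|\partial f_k/\partial x|\le C_1R_*^k$ with $R_*\in(0,2)$ on $I$ for small $\alpha$, and then performs precisely your constant-absorption step. Your step (2) is also fine in outline: since the coupling is additive, $\partial f_1/\partial x=f'(x)$, so $\partial f_k/\partial x=\prod_{j<k}f'(x_j)$ and only the orbit points move with $\alpha$.

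The genuine gap is your step (1). What is needed is an \emph{upper} bound: that $\limsup_k\frac1k\log\sup_{x\in B}|(f^k)'(x)|<\log 2$, i.e.\ that the \emph{maximal} Lyapunov exponent over all points of $B$ (equivalently, by subadditivity, over all invariant measures, in particular over all periodic orbits of the core) is strictly below $\log 2$. None of the tools you invoke addresses this. Ma\~{n}\'{e}-type and Misiurewicz/Collet--Eckmann estimates give \emph{lower} bounds on derivatives along orbits avoiding a critical neighbourhood; the Lyapunov exponent with respect to Lebesgue controls only almost every point, whereas $\sup_x|(f^k)'(x)|$ is governed by the worst orbit; and the assertion that ``the repelling cycle on which the critical orbit lands expands at a rate strictly between $1$ and $2$'' is precisely the statement to be proved, not a known input --- a priori a period-$p$ cycle in the core could have multiplier of modulus at least $2^p$. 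The claim is sharp and genuinely uses $a<2$: at $a=2$ the fixed point $p_1=-2$ lies \emph{inside} the core $[f^2(0),f(0)]=[-2,2]$ with multiplier $4$, so $\sup_x|(f^k)'(x)|\ge 4^k$ and the conclusion fails; for $a<2$ one must check that $p_1$ is expelled from the core \emph{and} that no invariant measure supported on the core has exponent $\ge\log 2$. This is the actual content of the lemma, and your proposal asserts it rather than proves it. You should either supply a genuine argument for this step (it requires properties specific to real quadratic maps with $a<2$) or, as the paper does, cite [BST, Lemma 3.1]; the remaining perturbation and constant bookkeeping is then routine, as you say.
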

\begin{proof}  Following from the proof of Lemma 3.1 in \cite{BST}, one can show that there exists $0<C_1<\infty$ and $R_*\in (0,2)$ such that
when $\alpha$ is sufficiently small, for all $(\theta,x)\in I$, we
have
$$\left|\frac{\partial f_k(\theta,x)}{\partial x}\right|\le C_1 R_*^k \text{ for }k\in
\mathbb{N}.$$ Then we take $R_1\in (R_*,2)$ and $n_0\in \mathbb{N}$
satisfying $C_1R_*^{n_0}\le R_1^{n_0}\le \frac{d^{n_0}}{6}$. This
gives us the desired estimate.
\end{proof}

\subsection{Building expansion}
The following is a variation of Lemma 2.5 in \cite{V}. The same
proof works.

Given $(\theta, x)\in \T\times B$, write $(\theta_j, x_j)=
F^j(\theta, x)$.

\begin{lem}\label{lem:vianaexpansion}
There are $\lambda>1$, $K>0$ and $\delta>0$ such that, when $\alpha$ is sufficiently small,
$$\left|\frac{\partial f_n(\theta, x)}{\partial x}\right|\ge
K\alpha^{0.6} \lambda^n$$ for all $(\theta, x)\in \T\times B$ with
$|x_0|, |x_1|, \ldots, |x_{n-1}|\ge \alpha^{0.6}$. If, in addition,
$|x_n|<\delta$ then we even have
$$\left|\frac{\partial f_n(\theta, x)}{\partial x}\right|\ge
K\lambda^n.$$
\end{lem}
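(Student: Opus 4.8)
The plan is to follow the classical Viana argument (Lemma 2.5 in \cite{V}), decomposing the vertical derivative along the orbit into the product of the single-step vertical derivatives $\partial_x f(x_j)+\text{(small terms)}$ and controlling this product in two regimes: ``far'' iterates, where $|x_j|$ is bounded away from $0$ and the quadratic map is genuinely expanding, and ``close'' iterates, where $|x_j|$ is small but bounded below by $\alpha^{0.6}$ and one loses at most a factor comparable to $1/|x_j|\le \alpha^{-0.6}$. First I would observe that, by the chain rule, $\partial_x f_n(\theta,x)=\prod_{j=0}^{n-1}\partial_x f_1(\theta_j,x_j)$, and since $\alpha$ is small, $\partial_x f_1(\theta_j,x_j)=f'(x_j)(1+O(\alpha))=-2x_j(1+O(\alpha))$; so up to a multiplicative constant the quantity to estimate is $\prod_{j=0}^{n-1}2|x_j|$.

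Next I would invoke the standard bounded-distortion / Collet--Eckmann type input available for $f$ itself. Since $f(x)=a-x^2$ has $x=0$ strictly preperiodic, the Misiurewicz condition gives constants $\kappa>0$ and $\sigma>1$ with $|(f^k)'(f(0))|\ge \kappa\sigma^k$ for all $k$, together with the fact that orbits of $f$ cannot stay near $0$ for long without having accumulated expansion. The key bookkeeping step: partition $\{0,1,\dots,n-1\}$ into the times $j$ with $|x_j|\ge\delta_0$ (a fixed small constant depending only on $f$) and the ``returns'' $j$ with $|x_j|<\delta_0$; near each return, one compares the piece of the $F$-orbit with a genuine $f$-orbit starting at $f(0)$ (the two stay close because $F$ is a small perturbation of the product map on the relevant finite time-window, and the shadowing length is controlled by how close $x_j$ is to $0$), so the factor $2|x_j|$ lost at the return is compensated by the $f$-expansion $|(f^k)'(f(0))|\gtrsim\sigma^k$ picked up over the next $k$ steps until the orbit leaves the $\delta_0$-neighborhood. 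Outside the returns one simply uses $2|x_j|\ge 2\delta_0>1$ after possibly enlarging $\beta$ (or, where $|x_j|$ is between $\alpha^{0.6}$ and $\delta_0$, absorbs the deficit into the single prefactor $\alpha^{0.6}$, which is where the exponent $0.6$ and the hypothesis $|x_0|,\dots,|x_{n-1}|\ge\alpha^{0.6}$ enter). Choosing $\lambda\in(1,\sigma)$ and $K>0$ appropriately, this yields $|\partial_x f_n(\theta,x)|\ge K\alpha^{0.6}\lambda^n$.

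For the improved bound under the extra hypothesis $|x_n|<\delta$, the point is that the ``$\alpha^{0.6}$-loss'' has to be paid only once, at the very first return, and only if that first return is already ``deep''. If instead $|x_n|<\delta$ for a suitably small fixed $\delta$, then $x_n$ is itself close to $0$, so the terminal segment of the orbit functions like a fresh return for $f$, and the $f$-expansion accumulated since the previous visit to the $\delta_0$-neighborhood more than compensates the single worst factor $2|x_0|\ge 2\alpha^{0.6}$; quantitatively one shows $\prod_{j=0}^{n-1}2|x_j|\cdot(\text{distortion})\ge K\lambda^n$ with no $\alpha^{0.6}$. Equivalently: being forced to terminate near the critical circle means you have just earned back the expansion you might have been missing.

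The main obstacle is the return-comparison / distortion control: one must show that over each excursion near $0$ the $F$-orbit shadows an $f$-orbit issued from $f(0)$ closely enough that the Misiurewicz expansion of $f$ transfers to $F$ with only a bounded distortion constant, uniformly in $n$ and uniformly for small $\alpha$. This is exactly the content of Viana's Lemma 2.5 and its proof (the vertical dynamics near a return is an $O(\alpha)$-perturbation of $f$, and $F(I)\subset\inter I$ with the derivative bounds from the Notes keeps everything in a compact region), so since ``the same proof works'' I would simply adapt that argument verbatim, the only cosmetic change being the appearance of $\alpha^{0.6}$ in place of Viana's threshold and the explicit statement of the sharper conclusion when the final iterate lands in $(-\delta,\delta)$.
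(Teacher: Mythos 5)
The paper itself gives no proof; it simply asserts that the argument of Lemma 2.5 in \cite{V} goes through verbatim, which is also your final stance, and your outline of that argument (chain-rule factorization of $\partial_x f_n$, factors $\approx 2|x_j|$, bound periods shadowing an $f$-orbit from $c_1=f(0)$ so that Collet--Eckmann expansion repays the small factor at the preceding return, the $\alpha^{0.6}$ hypothesis producing the single prefactor, and the terminal condition $|x_n|<\delta$ letting you drop it) is structurally correct.

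There is, however, one genuine slip in the middle. The assertion that outside returns ``one simply uses $2|x_j|\ge 2\delta_0>1$ after possibly enlarging $\beta$'' is wrong: the threshold $\delta_0$ must be chosen small enough that $(-\delta_0,\delta_0)$ is disjoint from the postcritical set $\overline{\{c_i: i\ge1\}}$, and for $a$ close to $1$ this forces $\delta_0\ll 1/2$; enlarging $\beta$ is irrelevant, since the iterates you need to control lie \emph{inside} $(-\beta,\beta)$, where $|f'|$ can be arbitrarily small. The correct input for the free periods is the Ma\~n\'e-type uniform hyperbolicity of the Misiurewicz map away from the critical point: there exist $C>0$ and $\lambda_0>1$ such that $|(f^m)'(y)|\ge C\lambda_0^m$ whenever $y,f(y),\ldots,f^{m-1}(y)\notin(-\delta_0,\delta_0)$, with the constant $C$ removable when moreover $f^m(y)\in(-\delta_0,\delta_0)$. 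This is precisely what Viana uses, and it transfers to the skew product via the same $O(\alpha)$-perturbation argument you already invoke for the bound periods. With that replacement your sketch is a faithful account of the cited proof.
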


\subsection{Shadowing}\label{sec:shadowing}
We present a more precise version of Lemma 2.4 in \cite{V} here.

Fix a constant $\xi_0>0$ such that for all $i\ge 1$,
$$c_i:=f^i(0)\not \in (-2\xi_0, 2\xi_0).$$

Assume $\alpha\in (0, \xi_0]$. Define $I_1=[c_1-2\alpha,
c_1+2\alpha]$, and for $n\ge 1$, inductively define
$I_{n+1}$ be the closed interval with the following property:
$I_{n+1}\Supset f(I_n)$ and both components of $I_{n+1}\setminus
f(I_n)$ have length $\alpha$. Let $N(\alpha)$ be the maximal
integer such that the following hold:
\begin{itemize}
\item $|I_n|<\xi_0$ for all $n=1,2,\ldots, N(\alpha)$;
\item $\sum_{n=1}^{N(\alpha)} |I_n|\le \xi_0.$
\end{itemize}
Note that $|\phi|\le 1.$ Then $F: \T\times I_n\to \T\times I_{n+1}$
is fibre-wise diffeomorphic, for any $n=1,2,\ldots,
N(\alpha)$.
\begin{lem}\label{lem:distortion}
For all $1\le m<
n\le N(\alpha)+1$, and for any $x_i\in I_i$, $i=m,m+1,
\ldots, n-1$, we have
$$e^{-1} |(f^{n-m})'(c_m)| \le \prod_{i=m}^{n-1}|f'(x_i)|\le e |(f^{n-m})'(c_m)|.$$
\end{lem}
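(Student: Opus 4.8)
The plan is to use the classical Koebe-type bounded distortion argument along the orbit of the critical value $c_m = f^m(0)$, exploiting the fact that the intervals $I_m, I_{m+1}, \ldots, I_n$ are built to be only slightly larger than the images $f(I_{m}), \ldots$ of the preceding one, and that their total length is controlled by $\xi_0$. Write $L = \log\prod_{i=m}^{n-1} |f'(x_i)| - \log |(f^{n-m})'(c_m)| = \sum_{i=m}^{n-1}\bigl(\log|f'(x_i)| - \log|f'(c_i)|\bigr)$. Since $f(y) = a - y^2$, we have $f'(y) = -2y$, so $\bigl|\log|f'(x_i)| - \log|f'(c_i)|\bigr| = \bigl|\log|x_i| - \log|c_i|\bigr| \le |x_i - c_i|/\min(|x_i|,|c_i|)$. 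The first task is therefore to bound $|x_i - c_i|$ and to bound $\min(|x_i|,|c_i|)$ from below; summing these bounds over $i$ must yield a total at most $1$.

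For the numerator: I claim $c_i \in I_i$ for each $i$ in range, so that $|x_i - c_i| \le |I_i|$. This is proved by induction: $c_1 \in I_1 = [c_1 - 2\alpha, c_1 + 2\alpha]$ trivially, and if $c_i \in I_i$ then $c_{i+1} = f(c_i) \in f(I_i) \subset I_{i+1}$. Hence $\sum_{i=m}^{n-1} |x_i - c_i| \le \sum_{i=m}^{n-1} |I_i| \le \sum_{i=1}^{N(\alpha)} |I_i| \le \xi_0$ by the defining property of $N(\alpha)$. For the denominator: since $c_i \notin (-2\xi_0, 2\xi_0)$ by the choice of $\xi_0$, and since $|x_i - c_i| \le |I_i| < \xi_0$, we get $|x_i| \ge 2\xi_0 - \xi_0 = \xi_0$, and likewise $|c_i| \ge 2\xi_0 > \xi_0$; so $\min(|x_i|,|c_i|) \ge \xi_0$. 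Combining, $|L| \le \sum_{i=m}^{n-1} |I_i|/\xi_0 \le \xi_0/\xi_0 = 1$, which gives $e^{-1} \le \prod |f'(x_i)| / |(f^{n-m})'(c_m)| \le e$, as desired. (One should choose $\xi_0$ a priori small enough that the bound $|x-c_i|/|c_i| \le |x - c_i|/\xi_0$ and the elementary inequality $|\log t - \log s| \le |t-s|/\min(t,s)$ combine cleanly; shrinking $\xi_0$ only helps, and the condition $c_i \notin (-2\xi_0, 2\xi_0)$ for all $i \ge 1$ can be arranged since $0$ is strictly pre-periodic, hence its forward orbit avoids $0$.)

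The main obstacle — really the only point needing care — is making sure the quantity we must sum is genuinely $\sum |I_i|$ and not something larger: a naive Koebe argument would want the intervals $I_i$ to have \emph{pairwise bounded overlap} with a definite ambient space, but here the clean telescoping works precisely because the distortion of $f$ at step $i$ is governed by the single ratio $|I_i|/\mathrm{dist}(I_i, 0)$, and the construction forces $\mathrm{dist}(I_i, 0) \ge \xi_0$ uniformly while $\sum|I_i| \le \xi_0$. So the two defining clauses of $N(\alpha)$ are exactly what is needed, and no further smallness of $\alpha$ enters beyond $\alpha \le \xi_0$. A secondary technical point is the constant in $|\log t - \log s| \le |t - s|/\min(t,s)$: this holds with constant $1$, so the final bound is exactly $e^{\pm 1}$, matching the statement.
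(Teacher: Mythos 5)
Your proof is correct and follows essentially the same route as the paper's: it reduces to the telescoping bound $\sum_i |x_i-c_i|/\min(|x_i|,|c_i|) \le \xi_0^{-1}\sum_i |I_i|\le 1$, using $c_i\in I_i$ and $|c_i|\ge 2\xi_0$ for $i\ge 1$. If anything your handling of the logarithm via $|\log t-\log s|\le |t-s|/\min(t,s)$ is a bit more careful than the paper's display, which rewrites the quantity as $\sum_i \log|1+(x_i-c_i)/c_i|$ and then applies $\log|1+y|\le |y|$ without tracking the sign of each term.
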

\begin{proof} In fact, $|c_i-x_i|\le |I_i|$ for all $m\le i<n$. Thus
\begin{align*}
\left|\log
\frac{\prod_{i=m}^{n-1}|f'(x_i)|}{|(f^{n-m})'(c_m)|}\right| &=
\sum_{i=m}^{n-1} \log \left|1+ \frac{x_i-c_i}{c_i}\right|  \le
\sum_{i=m}^{n-1} \frac{|x_i-c_i|}{|c_i|}\\ & \le
\xi_0^{-1}\sum_{i=m}^{n-1}|I_i| \le 1.
\end{align*}
The statement follows.
\end{proof}

\begin{lem}\label{lem:Nsigma}
There exists $C_0\in (0,1)$ such that
$$C_0\le |(f^{N(\alpha)})'(c_1)|\alpha\le \frac{1}{C_0}.$$
\end{lem}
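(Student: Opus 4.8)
The plan is to determine the size of each $I_n$ up to a bounded factor in terms of the critical derivatives $D_n:=|(f^{n-1})'(c_1)|$, and then to read the estimate off the definition of $N(\alpha)$. Since $x=0$ is strictly pre-periodic, $f$ is a Misiurewicz map, so the postcritical orbit $(c_i)_{i\ge1}$ is eventually periodic and the cycle it falls on is repelling (a quadratic Misiurewicz map has no non-repelling cycle, its immediate basin having to contain a critical point); hence $D_n$ grows exponentially, and I fix $\lambda>1$, $0<c_f\le C_f$ with $c_f\lambda^{\,n}\le D_n\le C_f\lambda^{\,n}$ for all $n\ge1$. I record the two consequences I will use: $S_0:=\sum_{n\ge1}D_n^{-1}<\infty$, and $\sum_{n=1}^{N}D_n\le C_3 D_N$ for all $N$, with $C_3=C_3(f)$.

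The core step is the two-sided comparison
$$4e^{-1}\alpha D_n\le |I_n|\le C_2\,\alpha D_n\qquad(1\le n\le N(\alpha)+1),$$
with $C_2=C_2(f)\ge4$. From $f(I_m)\subset I_{m+1}$ one gets $f^{\,k}(I_m)\subset I_{m+k}$, so $f^{\,n-1}(I_1)\subset I_n$; since $I_1,\dots,I_{n-1}$ all avoid $0$ (each has length $<\xi_0$ and contains some $c_i$ with $|c_i|\ge2\xi_0$), $f^{\,n-1}$ is monotone on $I_1$, and Lemma~\ref{lem:distortion} gives $|f^{\,n-1}(I_1)|\ge e^{-1}D_n|I_1|=4e^{-1}\alpha D_n$. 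For the upper bound I unfold the recursion: writing $G_j:=I_j\setminus f(I_{j-1})$ for the two fattening intervals of total length $2\alpha$ added at step $j$, one has $I_n=f^{\,n-1}(I_1)\cup\bigcup_{j=2}^{n}f^{\,n-j}(G_j)$; every forward iterate of $I_1$ or of $G_j$ occurring here lies in the corresponding $I_i$, so Lemma~\ref{lem:distortion} bounds $|f^{\,n-1}(I_1)|\le4e\,\alpha D_n$ and $|f^{\,n-j}(G_j)|\le2e\,\alpha D_n/D_j$, whence $|I_n|\le2e(2+S_0)\,\alpha D_n=:C_2\,\alpha D_n$. Crucially, decomposing $I_n$ this way — rather than iterating $|I_{n+1}|\le e|f'(c_n)|\,|I_n|+2\alpha$ — keeps the distortion factor $e$ from compounding into $e^{\,n}$.

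With the comparison in hand the lemma follows. We may assume $\alpha<\xi_0/4$, so $N:=N(\alpha)\ge1$ (for $\xi_0/4\le\alpha\le\xi_0$ we have $N(\alpha)=0$ and $|(f^{N(\alpha)})'(c_1)|\alpha=\alpha\in[\xi_0/4,\xi_0]$, which is trivial). For the upper bound: $|I_N|<\xi_0$ and $|f'|$ bounded on a neighbourhood of $B$ give $|I_{N+1}|=|f(I_N)|+2\alpha\le C\xi_0$, which with $|I_{N+1}|\ge4e^{-1}\alpha D_{N+1}$ yields $\alpha\,|(f^{N})'(c_1)|=\alpha D_{N+1}\le\tfrac14 eC\xi_0$. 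For the lower bound: maximality of $N(\alpha)$ forces either $|I_{N+1}|\ge\xi_0$, so $\xi_0\le|I_{N+1}|\le C_2\alpha D_{N+1}$, or $\sum_{n=1}^{N+1}|I_n|>\xi_0$, so $\xi_0<\sum_{n=1}^{N+1}|I_n|\le C_2\alpha\sum_{n=1}^{N+1}D_n\le C_2C_3\,\alpha D_{N+1}$; in both cases $\alpha D_{N+1}\ge\xi_0/(C_2C_3)$. Taking $C_0$ to be the minimum of $\tfrac12$, $\xi_0/(C_2C_3)$, $4/(eC\xi_0)$, $\xi_0/4$ and $1/\xi_0$ finishes the proof.

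The main difficulty is the upper bound $|I_n|\le C_2\alpha D_n$ with a constant independent of $n$: a naive induction along $I_{n+1}=f(I_n)\cup(\text{two }\alpha\text{-intervals})$ loses a factor $e$ from the distortion estimate at each step and only gives a bound blowing up like $e^{\,n}$. The fix is to follow the successive fattening pieces separately and apply the \emph{uniform} distortion bound of Lemma~\ref{lem:distortion} to entire iterates, and then to sum using $\sum_n D_n^{-1}<\infty$ — which is exactly where the strictly pre-periodic (Misiurewicz) hypothesis on $f$ is essential. The same exponential growth of $D_n$ also gives $\sum_{n\le N}D_n\le C_3 D_N$, which disposes of the second alternative in the maximality argument.
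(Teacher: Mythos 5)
Your proof is correct and follows essentially the same route as the paper's: the paper's recursion $|I_{n+1}|=D_n|I_n|+2\alpha$ (with $D_n=|f'(x_n)|$ the exact mean-value derivative, so no per-step loss) unfolds into exactly your decomposition $I_n=f^{n-1}(I_1)\cup\bigcup_{j}f^{n-j}(G_j)$, and the paper likewise invokes Lemma~\ref{lem:distortion} only once, after unfolding, to convert $\prod D_i$ into $|(f^k)'(c_1)|$ up to a factor $e$. The ``naive induction loses $e$ per step'' concern you raise is thus a straw man — the paper's exact MVT recursion sidesteps it in the same way your geometric decomposition does — but the rest of your argument (exponential growth of the critical derivatives from the Misiurewicz hypothesis, summability of $D_n^{-1}$, and reading off the estimate from the two alternatives in the maximality of $N(\alpha)$) matches the paper's reasoning.
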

\begin{proof}
By the mean value theorem, for each $n$, there exists
$x_n\in I_n$ such that
$$|I_{n+1}|=D_n |I_n|+2\alpha,$$
where $D_n=|f'(x_n)|$. It follows that for all $0\le n\le
N(\alpha)$,
\begin{align*}
|I_{n+1}|& = D_n|I_n|+2\alpha\\
& = D_n D_{n-1} |I_{n-1}|+2\alpha (1+D_{n})\\
& = \cdots\\
&= D_n D_{n-1}\ldots D_1 |I_1|+2\alpha (1+D_{n}+ D_{n}
D_{n-1}+\ldots +D_{n} D_{n-1} \ldots D_2)\\
& = 2\prod_{i=1}^n D_i \left(2\alpha +\alpha \sum_{k=1}^n
\prod_{i=1}^k D_i^{-1}\right).
\end{align*}

By Lemma~\ref{lem:distortion}, $\prod_{i=1}^k D_i^{-1}\asymp
|(f^k)'(c_1)|^{-1}$ is exponentially small in $k$. So
$$|I_{n+1}|\asymp |(f^n)'(c_1)|\alpha.$$ Since
$\sum_{n=1}^{N(\alpha)}|(f^n)'(c_1)|/|(f^{N(\alpha)})'(c_1)|$
is bounded from above by a constant depending only on $f$, the
conclusion follows by the definition of $N(\alpha)$.
\end{proof}
\begin{lem}\label{lem:expalpha}
The following holds provided that $\alpha$ is small enough: for each
$(\theta, x)$ with $|x|<\sqrt{\alpha}$,
$$\left|\frac{\partial f_{N(\alpha)}(f_1(\theta, x))}{\partial x}
\right|\ge \frac{C_0}{\alpha},$$ where $C_0>0$ is a universal
constant.
\end{lem}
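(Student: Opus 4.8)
The idea is to connect the vertical derivative of $F^{N(\alpha)}$ along the orbit of $(\theta,x)$ with the one-dimensional derivative $(f^{N(\alpha)})'(c_1)$, which we already control by Lemma~\ref{lem:Nsigma}. First I would write out the chain rule: setting $(\theta_j,x_j)=F^j(\theta,x)$, we have
\begin{equation*}
\frac{\partial f_{N(\alpha)}(f_1(\theta,x))}{\partial x}=\prod_{j=1}^{N(\alpha)}\frac{\partial f_1(\theta_j,x_j)}{\partial x}=\prod_{j=1}^{N(\alpha)}f'(x_j).
\end{equation*}
So the whole point is to show that the points $x_1,x_2,\ldots,x_{N(\alpha)}$ shadow the postcritical orbit $c_1,c_2,\ldots,c_{N(\alpha)}$ closely enough that Lemma~\ref{lem:distortion} applies with $m=1$, $n=N(\alpha)+1$, giving $\prod_{j=1}^{N(\alpha)}|f'(x_j)|\ge e^{-1}|(f^{N(\alpha)})'(c_1)|$, and then invoke Lemma~\ref{lem:Nsigma} to bound the latter below by $C_0/\alpha$ (absorbing the $e^{-1}$ into the constant).

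The key step is therefore the shadowing claim: if $|x|<\sqrt{\alpha}$ then $x_j\in I_j$ for all $1\le j\le N(\alpha)$. For the base case, $x_1=f(x)+\alpha\phi(\theta)=f(x)-f(0)-x^2+\alpha\phi(\theta)=c_1-x^2+\alpha\phi(\theta)$ (using $f(0)=c_1$ and $f(x)-f(0)=-x^2$), so $|x_1-c_1|\le |x|^2+\alpha|\phi(\theta)|\le \alpha+\alpha=2\alpha$, hence $x_1\in I_1=[c_1-2\alpha,c_1+2\alpha]$. For the inductive step, assuming $x_j\in I_j$, we have $x_{j+1}=f(x_j)+\alpha\phi(\theta_j)\in f(I_j)+[-\alpha,\alpha]\subset I_{j+1}$ by the very definition of $I_{j+1}$ (it contains $f(I_j)$ together with an $\alpha$-collar on each side), using again $|\phi|\le 1$. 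This runs as long as $j\le N(\alpha)$, which is exactly the range we need.

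With the shadowing in place the rest is immediate: Lemma~\ref{lem:distortion} (valid since all $x_j\in I_j$ and $1\le N(\alpha)<N(\alpha)+1\le N(\alpha)+1$) gives
\begin{equation*}
\left|\frac{\partial f_{N(\alpha)}(f_1(\theta,x))}{\partial x}\right|=\prod_{j=1}^{N(\alpha)}|f'(x_j)|\ge e^{-1}|(f^{N(\alpha)})'(c_1)|\ge \frac{e^{-1}C_0'}{\alpha},
\end{equation*}
where $C_0'$ is the constant from Lemma~\ref{lem:Nsigma}; renaming $C_0:=e^{-1}C_0'$ finishes the proof. The only mild subtlety — not really an obstacle — is keeping track of the index shift between $f_{N(\alpha)}(f_1(\theta,x))$ and the orbit segment $x_1,\ldots,x_{N(\alpha)}$, i.e.\ making sure the product has exactly $N(\alpha)$ factors corresponding to the intervals $I_1,\ldots,I_{N(\alpha)}$; the smallness hypothesis $|x|<\sqrt{\alpha}$ is used only once, in the base case $x_1\in I_1$.
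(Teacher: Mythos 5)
Your proof is correct and follows exactly the same route as the paper's: verify $f_1(\theta,x)\in I_1$ from $|x|<\sqrt\alpha$, propagate the orbit through the $I_j$'s (this is just the observation preceding Lemma~\ref{lem:distortion} that $F:\T\times I_n\to\T\times I_{n+1}$ is fibre-wise diffeomorphic), then invoke Lemmas~\ref{lem:distortion} and~\ref{lem:Nsigma}. The paper's proof is simply a more terse version of what you wrote, leaving the shadowing induction implicit.
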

\begin{proof} Note that
$f_1(\theta, x)\in I_1$. Thus there exists $x_i\in I_i$,
$i=1,2,\ldots, N(\alpha)$ such that
$$\left|\frac{\partial
f_{N(\alpha)}(f_1(\theta, x))}{\partial x}
\right|=\prod_{i=1}^{N(\alpha)}|f'(x_i)|.$$ Applying
Lemmas~\ref{lem:distortion} and~\ref{lem:Nsigma} gives us the result
by redefining $C_0$.
\end{proof}
\section{Admissible curves}
\label{sec:admissible} We prove that the images of horizontal curves
under a sufficiently high iterate of $F$, which will be called {\em
admissible curves}, are nearly horizontal and non-flat.

\subsection{A class of functions.}
\label{subsec:comega}

We introduce a special class of functions which will be needed in
the argument below.

Let $\hat\phi(\theta)=\phi (\theta\mod 1)$. Then $\hat\phi$ is a
real-analytic map from $\R$ to $\R$ with period $1$:
$\hat\phi(\theta+1)=\hat\phi(\theta)$. We say that a function
$T:\R\to \R$ is in {\em the class $\mathcal{T}_{\phi}$} if there
exist $a_n\in \R,$ and $k_n\in \{0,1,\ldots, d^n-1\}$,
$n=1,2,\ldots$ such that the following hold:
\begin{itemize}
\item $T(\theta)=\hat\phi'(\frac{\theta+k_1}{d})+\sum_{n=1}^\infty a_n \hat\phi'(\frac{\theta+k_n}{d^{n+1}})$;
\item $|a_n|\le C_1(\frac{R_1}{ d})^{n}$,
\end{itemize}
where $C_1$ and $R_1$ are as in Lemma \ref{lem:lyapunov}. Take $\rho>0$ such that $\hat\phi$ extends to a holomorphic function
defined in $S=\R\times (-2\rho, 2\rho)$. Then each
$T\in\mathcal{T}_\phi$ may be viewed as holomorphic mappings defined
on $S$. Clearly, $\mathcal{T}_\phi$ is a compact family with respect
to the topology defined by locally uniformly convergence in $S$.
\begin{lem}
\label{lem:classT} There exists $l_0\ge 2$, $A>\mu
>0$ depending only on $\phi$ such that for each $T\in
\mathcal{T}_\phi$, and any $\theta\in [0,1]$,
\begin{equation}\label{eqn:T}
\frac{Ad}{2} \ge \sum_{i=0}^{l_0+1}
|T^{(i)}(\theta)|\ge\sum_{i=0}^{l_0}|T^{(i)}(\theta)|\ge 2 d \mu.
\end{equation}
Moreover,
\begin{equation}\label{eqn:T_maximum}
\sup_{\theta\in [0,1]} |T(\theta)|\ge 2\mu.
\end{equation}
\end{lem}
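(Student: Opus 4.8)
The plan is to exploit the compactness of the family $\mathcal{T}_\phi$ together with the fact that no function in $\mathcal{T}_\phi$ vanishes identically, which forces a uniform lower bound on some derivative. First I would argue that every $T \in \mathcal{T}_\phi$ is nonconstant, in fact not identically zero: the leading term $\hat\phi'((\theta+k_1)/d)$ is a reparametrization of $\hat\phi'$, which is not identically zero since $\phi$ is nonconstant and real-analytic; the tail $\sum_{n\ge 1} a_n \hat\phi'((\theta+k_n)/d^{n+1})$ is dominated in sup-norm on $[0,1]$ (indeed on the complex strip $S$) by $\sum_n C_1 (R_1/d)^n \|\hat\phi'\|_{\infty,S}$, which is a convergent geometric series (since $R_1 < 2 \le d$), so by shrinking $\rho$ or more simply by passing to the compactness argument below we do not even need smallness here — we only need that the map $T \mapsto T$ is continuous and the family is compact and consists of nonzero analytic functions.

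Next I would set up the quantities $\mu$ and $A$. For $T \in \mathcal{T}_\phi$ define
\[
m(T) = \sup_{\theta \in [0,1]} |T(\theta)|, \qquad s_l(T) = \sup_{\theta \in [0,1]} \sum_{i=0}^{l} |T^{(i)}(\theta)|.
\]
Both are continuous functionals of $T$ in the topology of locally uniform convergence on $S$ (derivatives on the compact real interval $[0,1]$ are controlled by sup-norms on a slightly larger complex neighborhood via Cauchy estimates, so locally uniform convergence on $S$ implies uniform convergence of all derivatives on $[0,1]$). Since $\mathcal{T}_\phi$ is compact and every $T$ is a nonzero analytic function, $m(T) > 0$ for all $T$, hence $\mu_0 := \inf_{T \in \mathcal{T}_\phi} m(T) > 0$; set $2\mu := \mu_0$, which immediately gives \eqref{eqn:T_maximum}. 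For the lower bound in \eqref{eqn:T}, note that for a fixed $T$ and each $\theta$, if $\sum_{i=0}^{l} |T^{(i)}(\theta)|$ were small for large $l$ then by analyticity all derivatives at $\theta$ would be small and $T$ would be small near $\theta$ — but I want a bound uniform in $\theta$ and in $T$. The clean way: I claim there exists $l_0 \ge 2$ with $\inf_{T} \inf_{\theta \in [0,1]} \sum_{i=0}^{l_0} |T^{(i)}(\theta)| > 0$. Suppose not; then for each $l$ there are $T_l \in \mathcal{T}_\phi$ and $\theta_l \in [0,1]$ with $\sum_{i=0}^{l} |T_l^{(i)}(\theta_l)| \to 0$. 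Passing to subsequences, $T_l \to T_\infty \in \mathcal{T}_\phi$ and $\theta_l \to \theta_\infty \in [0,1]$, and by the Cauchy-estimate uniform control of derivatives one gets $T_\infty^{(i)}(\theta_\infty) = 0$ for every $i \ge 0$; since $T_\infty$ is analytic this forces $T_\infty \equiv 0$ on a neighborhood and hence on $[0,1]$, contradicting $m(T_\infty) \ge 2\mu > 0$. This yields $l_0$ and the constant $2d\mu$ after renaming (absorbing the factor $d$, which is harmless since $d \ge 2$ is fixed); note $\sum_{i=0}^{l_0+1} \ge \sum_{i=0}^{l_0}$ is trivial. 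Finally $A$ is obtained from the reverse direction: $s_{l_0+1}(T)$ is a continuous functional on the compact family $\mathcal{T}_\phi$, hence bounded above, and we set $Ad/2 := \sup_{T} s_{l_0+1}(T)$ (again the factor $d$ is just a normalization), and we may enlarge $A$ if necessary so that $A > \mu$.

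The main obstacle I anticipate is the passage from locally uniform convergence on the complex strip $S$ to uniform convergence of all the $T^{(i)}$ on the real interval $[0,1]$, and making sure the compactness claim "$\mathcal{T}_\phi$ is compact" (asserted in the text) is used correctly — in particular that a limit of elements of $\mathcal{T}_\phi$ is again in $\mathcal{T}_\phi$, which requires checking that the coefficient constraints $|a_n| \le C_1 (R_1/d)^n$ and $k_n \in \{0,\ldots,d^n-1\}$ are preserved under limits (the $k_n$ live in a finite set for each $n$ and the $a_n$ in a compact interval, so a diagonal argument extracts a subsequence along which each $a_n$ and each $k_n$ converges, and one checks the limiting series still represents the limit function). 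Everything else is soft: continuity of the functionals, the identity principle for analytic functions, and finite-dimensional compactness.
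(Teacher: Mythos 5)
Your soft machinery (compactness of $\mathcal{T}_\phi$, Cauchy estimates to upgrade locally uniform convergence on $S$ to convergence of all derivatives on $[0,1]$, the contradiction argument producing $l_0$, and the diagonal argument for closedness of the family) is correct and in fact spelled out more carefully than in the paper, which simply asserts that by compactness it suffices to show $0\notin\mathcal{T}_\phi$. But your argument has a genuine gap at exactly the one nontrivial point: you never actually prove that no $T\in\mathcal{T}_\phi$ vanishes identically. You observe that the leading term $\hat\phi'(\tfrac{\theta+k_1}{d})$ is not identically zero and that the tail is bounded in sup-norm by $C_1\sum_n (R_1/d)^n\|\hat\phi'\|_\infty$; but this bound is of the order $C_1\tfrac{R_1/d}{1-R_1/d}\|\hat\phi'\|_\infty$, which need not be smaller than the sup-norm of the leading term ($C_1$ may be large and $R_1/d$ may be close to $1$ when $d=2$), so cancellation is not excluded. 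Shrinking $\rho$ does not help ($\rho$ controls the width of the strip, not the size of the coefficients $a_n$), and ``passing to the compactness argument'' cannot help either, since compactness only converts the pointwise nonvanishing of every element of the closed family into uniform bounds --- it cannot supply the nonvanishing itself, which is precisely what you are assuming.

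The paper's proof consists entirely of closing this gap, by a derivative trick that exploits the contraction $d^{-(n+1)}$ in the arguments of the tail terms. Differentiating $l-1$ times, the $n$-th tail term acquires a factor $d^{-(n+1)(l-1)}$ while the leading term acquires only $d^{-(l-1)}$; evaluating at a point where $|\hat\phi^{(l)}|$ attains its maximum $M_l$ and normalizing, the tail contributes at most $M_l C_1\sum_{n\ge 1}(R_1 d^{-l})^n$, which is $\le M_l/2$ once $l$ is chosen large enough that $C_1\tfrac{R_1 d^{-l}}{1-R_1 d^{-l}}\le \tfrac12$. Hence $\sup_\theta|T^{(l-1)}(\theta)|\ge \tfrac12 M_l d^{-(l-1)}>0$ uniformly over $\mathcal{T}_\phi$, so $T\not\equiv 0$. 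With this step supplied, the rest of your argument goes through and yields the lemma.
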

\begin{proof}
By compactness of the family $\mathcal{T}_{\phi}$, it suffices to
show that $0\not\in\mathcal{T}_{\phi}$. To this end, we first
observe that for any $l\in\mathbb{N}$, $M_l=\sup |\hat\phi^{(l)}|>0$.
Choose $l$ sufficiently large such that
$$ C_1 \frac{ R_1d^{-l}}{1-R_1 d^{-l}}\le 1/2.$$ Now take
$T\in\mathcal{T}_\phi$.  Choose $\theta_0\in [0,1)$ such that
$|\hat\phi^{(l)}(\theta_0)|=M_{l}$,  and we have
\begin{align*}
d^{l-1}\left|T^{(l-1)}(d\theta_0-k_1)\right| & \ge  M_{l}- M_{l} C_1
\sum_{n=1}^{\infty}
\left(\frac{R_1}{d^{l}}\right)^{n}\\
& \ge  M_{l} \left (1-C_1 \frac{ R_1d^{-l}}{1-R_1 d^{-l}}\right)  \\
&\ge \frac{1}{2}  M_{l}.
\end{align*}
This shows that $T$ is not identically zero, completing the proof.
\end{proof}

\subsection{Partial derivatives of $F$}
Now let $d$, $\phi$, $\alpha$, $F$ be as in Theorem~\ref{thm:main},
and let $g(\theta)=d\theta\mod 1$. Recall that $f_n$ is the function
for which
$$F^n(\theta, x)=(g^n(\theta), f_n(\theta, x)).$$
Write
$$H_n(\theta, x)=\frac{\partial f_n(\theta,x)}{\partial \theta}
\text{ and }  V_n(\theta, x)=\frac{\partial f_n(\theta,x)}{\partial
x}.$$ Then
\begin{equation}\label{eqn:vn}
V_n(\theta, x)=  (f^n)'(x)+\sum_{m=1}^{2^{n-1}} \alpha^m
P_{m,n}(\theta,x),
\end{equation}
\begin{equation}\label{eqn:hn}
H_n(\theta, x)=\alpha d^{n-1} G_n(\theta, x)+ \sum_{2\le m\le 2^{n-1}}
\alpha^m Q_{m,n}(\theta, x),
\end{equation}
where $P_{m,n}, Q_{m,n}$ are real analytic functions defined on
$\T\times \R$, and
\begin{equation}\label{eqn:G}
G_n(\theta, x)=\sum_{k=1}^n \frac{(f^{n-k})'(f^kx)}{d^{n-k}}
\phi'(g^{k-1}\theta).
\end{equation}

We identify $\T$ with $[0,1)$ and let $\mathcal{P}_n$ be the
partition of $\T$ consisting of intervals $[j/d^n, (j+1)/d^n)$,
$j=0,1,\ldots, d^n-1$.

\noindent {\em Remark.} For each $\omega\in\mathcal{P}_n$ and $x\in
B$, it is easily seen that there exists $T_{\omega, x}\in
\mathcal{T}_{\phi}$ such that
$$G_n(\theta, x)= T_{\omega, x} (g^n\theta)\mbox{ for all }\theta\in\omega.$$

For any curve $X: \omega\to \R$, $\omega\in\mathcal{P}_n$, let
$F^n(X|\omega)$ denote the curve $Y: [0,1)\to \R$ with
$$Y(\theta)=f_n(\tau(\theta), X(\tau(\theta))),$$ where
$\tau=(g^n|_\omega)^{-1}$.

Let $\mathcal{H}_C(\alpha)$ denote the set of smooth curves
$X:[0,1)\to B$ for which there exists $T\in\mathcal{T}_{\phi}$ such
that $X'-\alpha d^{-1} T$ has $C^{l_0}$ norm bounded from above by
$C\alpha^2$. Let $n_0$ be the positive integer specified in Lemma
\ref{lem:lyapunov}.

\begin{lem}\label{lem:horizontal} There exists a constant $C>0$
such that the following holds provided that $\alpha>0$ is small
enough. Let $X$ be a curve in the class $\mathcal{H}_{2C}(\alpha)$,
$\omega\in \mathcal{P}_{n}$ with $n\ge n_0$, and let $Y=
F^n(X|\omega)$. Then $Y\in \mathcal{H}_C(\alpha)$.
\end{lem}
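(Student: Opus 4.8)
The plan is to track the first fibrewise derivative $Y'(\theta)$ of $Y = F^n(X|\omega)$ and show it has the form $\alpha d^{-1}\tilde T + (\text{error of size }C\alpha^2)$ in $C^{l_0}$ norm, for a suitable $\tilde T\in\mathcal{T}_\phi$. The chain rule gives $Y'(\theta) = H_n(\tau(\theta),X(\tau(\theta)))\cdot\tau'(\theta) + V_n(\tau(\theta),X(\tau(\theta)))\cdot X'(\tau(\theta))\cdot\tau'(\theta)$, where $\tau = (g^n|_\omega)^{-1}$ so that $\tau'\equiv d^{-n}$. First I would expand $H_n$ using \eqref{eqn:hn} and \eqref{eqn:G}: the leading term is $\alpha d^{n-1} G_n(\tau(\theta),X(\tau(\theta))) d^{-n} = \alpha d^{-1} G_n(\tau(\theta),X(\tau(\theta)))$, and by the Remark after \eqref{eqn:G} we have $G_n(\theta,x) = T_{\omega,x}(g^n\theta)$ for $\theta\in\omega$ with $T_{\omega,x}\in\mathcal{T}_\phi$; since $g^n\circ\tau = \mathrm{id}$ this contributes exactly $\alpha d^{-1} T_{\omega, X(\tau(\theta))}(\theta)$ — almost the desired leading term, except that the base point $X(\tau(\theta))$ varies with $\theta$.

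The remaining contributions must all be shown to be $O(\alpha^2)$ in $C^{l_0}$. For the $V_n \cdot X' \cdot \tau'$ term: by Lemma~\ref{lem:lyapunov}, $|V_k| \le R_1^k \le d^k/6$ on $I$ for $k\ge n_0$, and more precisely $V_n$ together with its $\theta$-derivatives up to order $l_0$ is bounded by $C_1 R_1^n$ (one differentiates \eqref{eqn:vn}; the analytic functions $P_{m,n}$ and their derivatives on the compact set $I$ are controlled, and crucially $(f^n)'(x)$ is just a function of $x$, whose composition with $X$ has bounded $C^{l_0}$ norm after multiplication by $\tau'=d^{-n}$). Since $X\in\mathcal{H}_{2C}(\alpha)$, its derivative $X'$ is $O(\alpha)$ in $C^{l_0-1}$ (being $\alpha d^{-1}T$ plus $O(\alpha^2)$ with $T\in\mathcal{T}_\phi$ a fixed compact family), so after differentiating the product up to order $l_0$ and using the factor $\tau' = d^{-n}$ together with $V_n$'s derivatives bounded by $C_1 R_1^n$ with $R_1<d$, we get a bound $\le C_1 (R_1/d)^n \cdot O(\alpha) \le O(\alpha^2)$ once $n\ge n_0$ is large (absorbing $(R_1/d)^n$ into the smallness). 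The higher-order terms $\sum_{m\ge 2}\alpha^m Q_{m,n}\tau'$ in $H_n$ start at $\alpha^2$; one checks their $C^{l_0}$ norms, after composition with the $C^{l_0}$-bounded curve $X$ and multiplication by $d^{-n}$, stay bounded by a constant times $\alpha^2$ (this is where one invokes that $Q_{m,n}$ are analytic on $\T\times\R$ with the sum over $m$ up to $2^{n-1}$ controlled exactly as in the derivation of \eqref{eqn:hn} — the same estimates that make $H_n$ itself of size $O(\alpha)$).

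The main obstacle is the dependence of the leading term on the moving base point: I claimed $G_n(\tau(\theta),X(\tau(\theta))) = T_{\omega,X(\tau(\theta))}(\theta)$, but to land in $\mathcal{H}_C(\alpha)$ I need a \emph{single} $T\in\mathcal{T}_\phi$ with $Y' - \alpha d^{-1}T$ of $C^{l_0}$ norm $\le C\alpha^2$. The fix is to freeze the base point: pick any reference point $x_*\in B$ (say $x_* = X(\tau(0))$ or the midpoint of $\omega$'s image), set $\tilde T = T_{\omega,x_*}\in\mathcal{T}_\phi$, and estimate $\|\alpha d^{-1}(T_{\omega,X(\tau(\cdot))}(\cdot) - \tilde T(\cdot))\|_{C^{l_0}}$. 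Here one uses that $x\mapsto T_{\omega,x}$ depends on $x$ through the analytic coefficients $(f^{n-k})'(f^k x)/d^{n-k}$, which are Lipschitz in $x$ with constants summable in $k$ (again the exponential smallness from $R_1/d<1$), while $X(\tau(\theta))$ ranges over an interval of length $O(\alpha)$ because $X' = O(\alpha)$; differentiating in $\theta$ up to order $l_0$ and tracking both the $O(\alpha)$ variation of the base point and the $d^{-n}$-type gains keeps this difference within $O(\alpha^2)$. Collecting the leading term $\alpha d^{-1}\tilde T$ and all the $O(\alpha^2)$ errors — choosing $C$ to dominate the sum of the implied constants, and $n_0$ (already fixed in Lemma~\ref{lem:lyapunov}) large enough that the geometric factors $(R_1/d)^{n}$ are negligible — yields $Y\in\mathcal{H}_C(\alpha)$, provided one has been careful that the constant $C$ produced does not itself depend on the input being in $\mathcal{H}_{2C}$ rather than $\mathcal{H}_C$; this is arranged because the feedback of the $2C\alpha^2$ error from $X$ into $Y$ is multiplied by the small factor $(R_1/d)^n \le 1/6$ (from Lemma~\ref{lem:lyapunov}, $V_n\le d^n/6$ cancelling $\tau' = d^{-n}$), so $2C\cdot\tfrac16 < C$ with room to spare for the genuinely new $O(\alpha^2)$ contributions.
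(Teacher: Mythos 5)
There is a genuine gap in your treatment of the term $V_n(\tau\theta,X(\tau\theta))\,X'(\tau\theta)\,\tau'(\theta)$. You bound it by $C_1(R_1/d)^n\cdot O(\alpha)$ and then claim this is $O(\alpha^2)$ by ``absorbing $(R_1/d)^n$ into the smallness.'' That cannot work: $n_0$ is fixed independently of $\alpha$ (it comes from Lemma~\ref{lem:lyapunov}), the lemma must hold for $n=n_0$ and $n=n_0+1$ (this is exactly how it is used to show admissible curves lie in $\mathcal{H}_C(\alpha)$), and for fixed $n$ the quantity $(R_1/d)^n\alpha$ is a fixed positive constant times $\alpha$, not $O(\alpha^2)$. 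Concretely, writing $X'=\alpha d^{-1}T+O(\alpha^2)$, the offending term contains
$$\frac{(f^n)'(x)}{d^n}\cdot \alpha d^{-1}\,T(\tau\theta),$$
which is genuinely of size $\asymp \alpha\,(R_1/d)^n$ (note $\sup|T|\ge 2\mu$ by Lemma~\ref{lem:classT}). This piece cannot be discarded as error; it must be \emph{absorbed into the new element of $\mathcal{T}_\phi$}. This is precisely why $\mathcal{T}_\phi$ is defined as an infinite series $\hat\phi'(\frac{\theta+k_1}{d})+\sum_n a_n\hat\phi'(\frac{\theta+k_n}{d^{n+1}})$ with $|a_n|\le C_1(R_1/d)^n$: the class is built to be invariant under the renormalization $T\mapsto S+\frac{(f^n)'(x)}{d^n}\,T\circ\tau$, where $S$ is your $T_{\omega,x_*}$. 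The paper's proof takes $\hat T=S+\frac{(f^n)'(x)}{d^n}T\circ\tau$ as the new reference element; your $\tilde T=T_{\omega,x_*}$ keeps only the $S$ part, so $Y'-\alpha d^{-1}\tilde T$ is of order $\alpha$, not $\alpha^2$, and $Y\notin\mathcal{H}_C(\alpha)$ by your construction. Your closing remark about the $2C\alpha^2$ error being contracted by the factor $(R_1/d)^n\le 1/6$ is the correct mechanism, but it applies only to the residual $X'-\alpha d^{-1}T$, not to the main part of $X'$.

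A secondary point: you attempt to estimate all $n\ge n_0$ in one pass, which requires $C^{l_0}$ bounds on $\sum_{m}\alpha^m P_{m,n}$ and $\sum_m\alpha^m Q_{m,n}$ that are uniform in $n$ (the number of terms and the sizes of these analytic coefficients grow with $n$). The paper avoids this by proving the statement directly only for $n_0\le n\le 2n_0$, where the constants $K_n$ range over a finite set, and then iterating via $F^n=F^{n_0}\circ F^{n-n_0}$; you should adopt the same two-step structure (or justify the uniformity). The rest of your outline --- the chain rule decomposition, freezing the base point at $x_*=X(\tau(0))$ and controlling the resulting difference by the $O(\alpha)$ oscillation of $X$ --- matches the paper's $Z_1$, $Z_2$ estimates and is fine.
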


\begin{proof} Let $C>0$ be a large constant to be determined.

Assume first that $n_0\le n\le 2n_0$. By assumption on $X$, there
exists $T\in\mathcal{T}_{\phi}$ such that the $C^{l_0}$-norm of
$X'-\alpha d^{-1}T$ is bounded from above by $2C\alpha^2$. Let
$$S(\theta)=\sum_{k=1}^n \frac{(f^{n-k})'(x)}{d^{n-k}}\phi'(g^{k-1}\circ\tau(\theta)),$$
and
$$\hat T(\theta) := S(\theta) +
\frac{(f^n)'(x)}{d^n}T(\tau\theta),$$ where $\tau=(g^n|\omega)^{-1}$
and $x=X(\tau(0))$. Then $S, \hat T\in\mathcal{T}_{\phi}$ by Lemma
\ref{lem:lyapunov}. Recall that
$$Y(\theta)= f_n(\tau(\theta), X(\tau(\theta))),$$ hence
\begin{equation*}
Y'(\theta) = H_n(\tau\theta, X(\tau \theta))\tau'(\theta) +
V_n(\tau\theta, X(\tau \theta)) X'(\tau(\theta))\tau'(\theta).
\end{equation*}
Let us estimate
$$Y'(\theta)-\alpha d^{-1} \hat T(\theta)= Z_1(\theta)+Z_2(\theta)
+Z_3(\theta),$$ where
\begin{align*}
Z_1(\theta)&:=H_n(\tau\theta, X(\tau\theta))\tau'(\theta)-\alpha
\frac{1}{d} S(\theta)\\
Z_2(\theta)& :=\frac{V_n(\tau\theta, X(\tau\theta))-
(f^n)'(x)}{d^n}X'(\tau\theta),\\
Z_3(\theta)& :=\frac{(f^n)'(x)}{d^n} \left(X'(\tau\theta)-\alpha
d^{-1} T(\tau\theta)\right).
\end{align*}
By (\ref{eqn:hn}), (\ref{eqn:G}) and (\ref{eqn:vn}), we see that
there exists a constant $K_n$ such that for $i=1,2$,
$$\|Z_i\|_{l_0}\le K_n\alpha\cdot \| X(\tau(\theta))-x\|_{l_0}.$$
Let $$C= 3A\max_{n_0\le n \le 2n_0} K_n,$$ where $A$ is as in
Lemma~\ref{lem:classT}. Then provided that $\alpha$ is small enough,
$$\|X(\tau(\theta))-x\|_{l_0}\le \|X'\|_{l_0}\le \alpha d^{-1} \|T\|_{l_0}+2C\alpha^2
\le \alpha A,$$ hence
$$\|Z_i\|_{l_0}\le C\alpha^2/3, \,\,\,\, i=1,2.$$
By choice of $n_0$, we have $$\|Z_3\|_{l_0}\le C\alpha^2/3.$$ Thus
$$\|Y'-\alpha d^{-1}\hat T\|_{l_0}\le C\alpha^2,$$
completing the proof for the case $n_0\le n\le 2n_0$.

The general case follows by induction. Assume that the conclusion
holds for $n\le kn_0$, $k\ge 2$. To deal with case  $kn_0<n\le (k+1)
n_0$, let $\omega_1$ be the element of $\mathcal{P}_{n-n_0}$ which
contains $\omega$ and let $Y_1:= F^{n-n_0}(X|\omega_1)$. Then by
induction hypothesis $Y_1\in \mathcal{H}_C(\alpha)$. Since
$Y=F^{n_0}(Y_1|g^{n-n_0}(\omega))$, we obtain
$Y\in\mathcal{H}_C(\alpha)$.
\end{proof}

Let us say that a curve $X: [0,1)\to B$ is {\em admissible} if it is
the image of a horizontal curve under $F^{n}$ for some $n\ge n_0+1$:
there exists $\omega\in\mathcal{P}_n$, $x_0\in B$ such that
$X(g^n(\theta))=f_n(\theta,x_0)$ for all $\theta\in \omega$. By
Lemma~\ref{lem:horizontal} and Remark on page 5, $X\in\mathcal{H}_C(\alpha)$.

\begin{prop} \label{prop:deepreturn}
There exists $\eps_0>0$ and $\kappa>0$ such that when $\alpha$ is
sufficiently small, for any admissible curve $X: [0,1)\to B$, and
any $0\le \eps<\eps_0$, we have
$$|\{\theta\in [0,1): |X(\theta)|<\alpha\eps\}|\le \eps ^{\kappa}.$$
\end{prop}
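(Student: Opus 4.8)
The plan is to exploit the structure of admissible curves established in Lemma~\ref{lem:horizontal}: every admissible curve $X$ lies in $\mathcal{H}_C(\alpha)$, meaning $X'(\theta)=\alpha d^{-1}T(\theta)+O(C\alpha^2)$ in $C^{l_0}$-norm for some $T\in\mathcal{T}_\phi$, where by Lemma~\ref{lem:classT} we have the uniform lower bound $\sum_{i=0}^{l_0}|T^{(i)}(\theta)|\ge 2d\mu>0$ on $[0,1]$. Thus, scaling out the factor $\alpha$, the function $\psi:=\alpha^{-1}X$ satisfies $\psi'=d^{-1}T+O(C\alpha)$ in $C^{l_0}$, so for $\alpha$ small $\psi'$ inherits a quantitative non-flatness: at every point at least one of the first $l_0$ derivatives of $\psi'$ is bounded below in absolute value by, say, $\mu$. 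Consequently $\psi$ itself has at every point some derivative of order $\le l_0+1$ bounded below. This is exactly the hypothesis needed to run a standard ``finitely many turning points'' / sublevel-set estimate.

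The key steps, in order, are as follows. First, show that $\psi=\alpha^{-1}X:[0,1)\to[-\beta\alpha^{-1},\beta\alpha^{-1}]$ — this is a map whose range is large — is \emph{uniformly non-flat of order $\le l_0+1$} on a slightly enlarged interval: there exist $\eta_0>0$ and an integer $L=l_0+1$, depending only on $\phi$, such that for every $\theta$, $\max_{1\le i\le L}|\psi^{(i)}(\theta)|\ge \eta_0$, with $\psi^{(i)}$ uniformly bounded above as well (the upper bound comes from the $Ad/2$ side of \eqref{eqn:T} together with the $C^{l_0+1}$ control in the definition of $\mathcal{H}_C$). Second, invoke the classical consequence of such uniform non-flatness: the measure of a sublevel set $\{|\psi-c|<t\}$ is $O(t^{1/L})$, uniformly in $c$ and in the choice of $\psi$ from this compact family. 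This follows by a routine induction on the order of non-flatness (cf.\ the van der Corput / Yomdin-type lemmas): one partitions $[0,1)$ according to where $\psi'$ is small or large, uses monotonicity on the ``large'' pieces, and applies the inductive hypothesis to $\psi'$ on the ``small'' pieces whose total number is bounded because $\psi'$ itself is non-flat of order $\le L-1$. Third, apply this with $c=0$ and $t=\eps$: $\{\theta:|X(\theta)|<\alpha\eps\}=\{\theta:|\psi(\theta)|<\eps\}$ has measure $\le C'\eps^{1/L}$. Setting $\kappa=1/(2L)$, say, and $\eps_0$ small enough that $C'\eps_0^{1/L}\le \eps_0^{1/(2L)}$ on $[0,\eps_0)$, we get $|\{\theta:|X(\theta)|<\alpha\eps\}|\le \eps^\kappa$ as claimed.

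The main obstacle is making the uniformity genuinely independent of the admissible curve $X$ and of $\alpha$. The non-flatness order $L$ and the constant $\eta_0$ must come only from $\mathcal{T}_\phi$ and not degenerate as $\alpha\to 0$ or as $X$ ranges over all admissible curves; this is precisely why Lemma~\ref{lem:classT} gives a \emph{uniform} lower bound over the compact family $\mathcal{T}_\phi$, and why we scale by $\alpha^{-1}$ before taking derivatives — the $O(C\alpha^2)$ error in the definition of $\mathcal{H}_C$ becomes $O(C\alpha)$ after scaling and is harmless for small $\alpha$. A secondary technical point is that $\mathcal{T}_\phi$, and hence the non-flatness, is controlled on $[0,1]$ whereas the sublevel-set induction needs a little room at the endpoints; this is handled by the periodicity of the configuration (one works on $\T$, or equivalently extends slightly past $[0,1)$ using that $X$ arises from $f_n$ which is globally defined), so no real difficulty arises. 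Once the uniform non-flatness is in hand, the sublevel-set estimate is entirely standard and the exponent $\kappa$ can be taken to be any constant smaller than $1/(l_0+1)$.
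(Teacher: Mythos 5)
Your proposal follows essentially the same route as the paper: rescale to $\psi=\alpha^{-1}X$, use $X\in\mathcal H_C(\alpha)$ together with the uniform bounds of Lemma~\ref{lem:classT} to obtain that some derivative of $\psi$ of bounded order is bounded away from zero (with an upper bound controlling the number of ``turning'' intervals), and then apply a standard sublevel-set estimate. The paper implements exactly this by partitioning $[0,1)$ into boundedly many intervals $J_i$ on which some $X^{(j_i)}$ is bounded below by $\mu\alpha/(2l_0)$ and invoking Lemma~5.3 of \cite{BST} on each piece, whereas you sketch the sublevel-set lemma by induction; the two are the same argument.
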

\begin{proof} By definition, there exists $T\in\mathcal{T}_{\phi}$
such that $X'-\alpha d^{-1} T$ has $C^{l_0}$-norm bounded from above
by $C\alpha^2$. Then provided that $\alpha$ is small enough, by
Lemma~\ref{lem:classT}  for any $\theta\in [0,1)$,
$$\mu \alpha\le 2\mu \alpha-C\alpha^2\le \sum_{i=1}^{l_0} |X^{(i)}(\theta)|\le \sum_{i=1}^{l_0+1} |X^{(i)}(\theta)|\le \frac{A}{2}\alpha+C\alpha^2\le A\alpha.$$
It follows that we can divide $[0,1)$ as the disjoint union of
intervals $J_i$, $i=1,2,\ldots, m$, with the following properties:
\begin{itemize}
\item $|J_i|\ge \frac{\mu }{2A l_0}$,
\item there exists $j_i\in
\{1,2,\ldots , l_0\}$ such that $|X^{(j_i)}(\theta)|\ge \frac{\mu
\alpha}{2l_0}$ for $\theta\in J_i$.
\end{itemize}
By Lemma 5.3 in \cite{BST}, $|\{ \theta\in J_i: |X(\theta)|< \alpha
\eps\}|<2^{j_i+1}(\frac{2l_0\eps}{\mu})^{\frac{1}{j_i}}$ for all
$i\in \{ 1,2,\cdots,m\}$ and $\eps>0$. Take $W=\max
\{2^{j_i+1}(\frac{2l_0}{\mu})^{\frac{1}{j_i}}:i=1,2,\cdots,l_0\}$.
Thus
$$|\{ \theta\in [0,1) :X(\theta)|<\alpha
\eps\}|<mW\epsilon^{\frac{1}{l_0}}\le \frac{2Al_0}{\mu}W
\eps^{\frac{1}{l_0}} \text{ for all }\eps \in (0,1). $$ Choosing
$\kappa=\frac{1}{2l_0}$ and $\eps_0\in (0,1)$ with
$\frac{2Al_0}{\mu}W \eps_0^{\frac{1}{2l_0}}\le 1$, the proposition
follows.
\end{proof}

\section{Recurrence to small neighborhoods of the critical circle}
\label{sec:recurrence} We study the recurrence of an admissible
curve $X:[0,1)\to B_\alpha:=[-2\alpha^{0.6}, 2\alpha^{0.6}]$ to the
region $\T\times B_\alpha$. More, precisely, for each $\theta\in
[0,1)$, let $n_0(\theta)=0$ and let
$$n_1(\theta)< n_2(\theta)< \cdots$$
be all the positive integers (finitely or infinitely many) for which
there exists $\theta'$ which is contained in the same element
$\omega$ of $\mathcal{P}_{n_k(\theta)}$ as $\theta$, such
that $|f_{n_k(\theta)}(\theta', X(\theta'))|\le \alpha^{0.6}$. Since
$F^{n_k(\theta)}(X|\omega)$ is an admissible curve, we have that
$f_{n_k(\theta)} (\theta', X(\theta'))\in B_\alpha$ for all $\theta'\in
\omega$. Note that if $n_k(\theta)=n$ and $\omega$ is the element of $\mathcal{P}_n$ which contains $\theta$,
then $n_k(\theta')=n$ for all $\theta'\in \omega$.

For $\alpha>0$ let $N(\alpha)$ be as in
\S\ref{sec:shadowing}.
The main result of this section is the following:

\begin{prop}\label{prop:return0}
Given $M>0$, the following holds provided that
$\alpha>0$ is small enough:  For each $n\ge 0$ and $k\ge 0$, if $\omega$ is an element of $\mathcal{P}_n$ on which $n_k=n$, then
$$\left|\{\theta\in\omega: n_{k+1}(\theta)-n_k(\theta)\le N(\alpha)+M\} \right| \le
\frac{2}{3}|\omega|.$$
\end{prop}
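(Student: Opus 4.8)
The plan is to transfer the statement to the single admissible curve $Y:=F^{n}(X|\omega)$, which maps $[0,1)$ into $B_\alpha$, and to work with the functions $y_j(\eta):=f_j(\eta,Y(\eta))$ on $[0,1)$. If $\theta\in\omega$ and $\eta=g^{n}\theta$, then $n_{k+1}(\theta)-n_k(\theta)$ is exactly the least $j\ge1$ for which some $\eta''$ in the element of $\mathcal P_j$ containing $\eta$ satisfies $|y_j(\eta'')|\le\alpha^{0.6}$; since $g^{n}|_\omega$ scales Lebesgue measure by the factor $d^{n}$, the assertion is equivalent to
\[
\Big|\Big\{\eta\in[0,1):\ \exists\,1\le j\le N(\alpha)+M\ \text{and}\ \eta''\in[\eta]_{\mathcal P_j}\ \text{with}\ |y_j(\eta'')|\le\alpha^{0.6}\Big\}\Big|\le\tfrac23.
\]

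The small values of $j$ are handled by shadowing. For $\alpha$ small we have $B_\alpha\subset(-\sqrt\alpha,\sqrt\alpha)$, so $y_1(\eta)=c_1-Y(\eta)^2+\alpha\phi(\eta)\in I_1$ for all $\eta$, and then, by the construction in \S\ref{sec:shadowing}, $y_j(\eta)\in I_j$ for all $\eta$ and all $j\ge1$. For $1\le j\le N(\alpha)$ the interval $I_j$ contains $c_j$ and has length $<\xi_0$, while $|c_j|\ge2\xi_0$; hence every point of $I_j$ has absolute value $>\xi_0>\alpha^{0.6}$, so no such $j$ can occur. Thus the set in the display is contained in $\bigcup_{j=N(\alpha)+1}^{N(\alpha)+M}E_j$, where $E_j=\{\eta:\exists\,\eta''\in[\eta]_{\mathcal P_j},\ |y_j(\eta'')|\le\alpha^{0.6}\}$. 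For $\omega'\in\mathcal P_j$ the curve $F^j(Y|\omega')$ is admissible, hence lies in $\mathcal H_C(\alpha)$, so its derivative has sup-norm $O(\alpha)$ and $y_j|_{\omega'}$ has oscillation $O(\alpha)$; therefore $E_j\subset\{\eta\in[0,1):|y_j(\eta)|\le 2\alpha^{0.6}\}$ once $\alpha$ is small.

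It remains to show $|\{|y_j|\le2\alpha^{0.6}\}|\le\frac{2}{3M}$ for $N(\alpha)<j\le N(\alpha)+M$, provided $\alpha$ is small enough in terms of $M$. Two ingredients enter. First, $y_j$ is \emph{quantitatively non-flat at scale $\alpha d^j$}: writing the derivative of $F^j(Y|\omega')\in\mathcal H_C(\alpha)$ as $\alpha d^{-1}T+O(\alpha^2)$ with $T\in\mathcal T_\phi$, Lemma~\ref{lem:classT} gives $\sum_{i=1}^{l_0+1}|(F^j(Y|\omega'))^{(i)}|\gtrsim\alpha$ on $[0,1)$; since $y_j=(F^j(Y|\omega'))\circ g^j$ on $\omega'$ and each differentiation in $\eta$ brings out a factor $d^j$, we obtain $\sum_{i=1}^{l_0+1}|y_j^{(i)}(\eta)|\gtrsim\alpha d^j$ for every $\eta\in[0,1)$; moreover $\alpha d^j\gg\alpha^{0.6}$, because on $B$ one has $|f'|\le2\beta<4\le d^{2}$ and the critical orbit $(c_i)_{i\ge1}$ lies in $B$, so $|(f^{N(\alpha)})'(c_1)|<d^{2N(\alpha)}$, whence $\alpha\gtrsim d^{-2N(\alpha)}$ by Lemma~\ref{lem:Nsigma} and $\alpha d^j\ge\alpha d^{N(\alpha)}\gtrsim\alpha^{1/2}$. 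Second, a \emph{distribution estimate for the curves $F^j(Y|\omega')$}, $\omega'\in\mathcal P_j$: although these curves need not be pairwise separated, one uses the expansion estimate Lemma~\ref{lem:expalpha}---which gives $|\partial f_{N(\alpha)}(f_1(\eta,Y(\eta)))/\partial x|\ge C_0/\alpha$ for all $\eta$, since $|Y(\eta)|<\sqrt\alpha$---together with the fact that $y_j$ takes values in the bounded interval $I_j$ (whose length depends only on $f$, $\xi_0$ and $M$), to bound how many of the $\sim\alpha d^j$ excursions of $y_j$ across $I_j$ can come within $2\alpha^{0.6}$ of $0$. Combining the two---feeding the first into the sub-level-set estimate (Lemma~5.3 of \cite{BST}) exactly as in the proof of Proposition~\ref{prop:deepreturn}, to bound the measure contributed by each near-$0$ excursion, and using the second to bound the number of such excursions---gives $|\{|y_j|\le2\alpha^{0.6}\}|\to0$ as $\alpha\to0$, uniformly over $j\in\{N(\alpha)+1,\dots,N(\alpha)+M\}$. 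Summing over the at most $M$ values of $j$ and taking $\alpha$ small in terms of $M$ then yields $\tfrac23$.

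The hard part is the second ingredient. For the specific coupling functions of \cite{V,BST}, Lemma~2.6 of \cite{V} and Proposition~5.2 of \cite{BST} provide genuine separation of the images, and the crude count ``total variation of $y_j$ over band width'' suffices; for a general real-analytic $\phi$ this fails, and one must extract from Lemma~\ref{lem:expalpha} and the non-degeneracy of the class $\mathcal T_\phi$ (Lemma~\ref{lem:classT})---which is exactly where real-analyticity of $\phi$ is essential---the weaker quantitative information that still forces the part of $[0,1)$ on which $|y_j|\le2\alpha^{0.6}$ to have measure $o(1)$.
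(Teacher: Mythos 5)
Your reduction to the single admissible curve $Y=F^n(X|\omega)$ and the functions $y_j(\eta)=f_j(\eta,Y(\eta))$ is correct, as is the observation that $j\le N(\alpha)$ cannot produce a return (shadowing forces $|y_j|\ge\xi_0$), so that only $j\in\{N(\alpha)+1,\dots,N(\alpha)+M\}$ matter. The scale computation $\alpha d^{N(\alpha)}\gtrsim\alpha^{1/2}\gg\alpha^{0.6}$ is also correct. But from that point on there is a genuine gap, and in fact you have correctly identified it yourself: the entire argument hinges on the \emph{distribution estimate} for the branches $F^j(Y|\omega')$, $\omega'\in\mathcal P_j$, which you call the ``second ingredient'' and ``the hard part.'' You gesture at Lemma~\ref{lem:expalpha} and the non-degeneracy of $\mathcal T_\phi$, but you do not actually produce the estimate, and neither lemma controls how the values $y_j|_{\omega'}$ are distributed across $I_j$ as $\omega'$ varies. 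The non-flatness of $y_j$ on a single $\omega'$ (your first ingredient) is useless here because the oscillation of $F^j(Y|\omega')$ is $O(\alpha)$, which is far \emph{below} the threshold $\alpha^{0.6}$; so an $\omega'$ is essentially either entirely inside the level set or entirely outside it, and the sub-level-set lemma (Lemma~5.3 of \cite{BST}) gives nothing within a single branch. The separation of branches is exactly what \cite{V,BST} got from the special form $\phi=\sin(2\pi\theta)$ and what fails for general real-analytic $\phi$ — this is the whole point of the paper. Moreover, your target claim that $|\{|y_j|\le 2\alpha^{0.6}\}|\to 0$ \emph{uniformly in $j$} is stronger than what the paper actually proves and is not obviously true.

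The paper's route is genuinely different and avoids the need to control individual level sets. It first reduces, via the Shadowing Lemma~\ref{lem:distortion}, from the admissible curve $X_0$ to the constant fibre $Y_j(\theta)=f_j(\theta,0)$ (showing $|f_j(\theta,0)-f_j(\theta,X_0(\theta))|\le\sigma$ for all $j\le N(\alpha)+M$ and small $\alpha$). Then it bounds the measure of the \emph{union} $\Theta=\bigcup_{j\le N(\alpha)+M}\{|Y_j|<2\sigma\}$ directly by a pairing argument: fix a large $N_1$ so that $\phi$ is not $d^{-N_1}$-periodic, set $\theta'=\theta+d^{-N_1}$, and use the Fourier expansion of $\phi$ (Claim~1 in the paper) to show that $|Q_{N_1}(\theta)-Q_{N_1}(\theta')|\ge 4\eta$ on a set $\Omega$ of measure $\ge 0.99$. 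This separation propagates through Lemma~\ref{lem:distortion} and the subsequent $M$ uniformly expanding steps to show that for $\theta\in\Omega$, $\theta$ and $\theta'$ cannot both lie in $\Theta$ (Claim~2). Since the translation by $d^{-N_1}$ is measure-preserving, $|\Theta\cap\Omega|\le 1/2$, giving $|\Theta|\le 0.51<2/3$. No distribution estimate for the branches of $y_j$, and no bound on individual level sets, is needed. This is precisely the place where the paper replaces the Lemma~2.6 of \cite{V}/Proposition~5.2 of \cite{BST} separation mechanism by a weaker, Fourier-based separation that works for arbitrary non-constant real-analytic $\phi$.
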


Proposition~\ref{prop:return0} is a direct consequence of
Proposition~\ref{prop:return} which will be proved in
\S~\ref{subsec:rec}. As a corollary of this result, we shall prove
in \S~\ref{subsec:truncate} that suitably truncated vertical
derivatives of $f_n$ at $(\theta, X(\theta))$ is exponentially big
in $n$ for a.e. $\theta$.

\subsection{Recurrence of the critical set}\label{subsec:rec}

\begin{prop}\label{prop:return}
For each $M\in\mathbb{N}$, there exists $\sigma>0$ such that the
following holds provided that $\alpha>0$ is sufficiently small. Let
$X_0:[0,1)\to B_\alpha$ be an admissible  curve
and for $n=0,1,\ldots, $ let
$$\Theta_n=\{\theta\in \T: |f_n(\theta,
X_0(\theta))|<\sigma \}.$$ Then
$$\left|\bigcup_{n=1}^{N(\alpha)+M}\Theta_n\right|\le \frac{2}{3}.$$
\end{prop}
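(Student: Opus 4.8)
The plan is to control, uniformly over admissible curves, the measure of parameters $\theta$ for which the orbit of $(\theta, X_0(\theta))$ enters the $\sigma$-neighborhood of the critical circle at \emph{some} time $n\le N(\alpha)+M$. The first step is to fix $M$ and to choose $\sigma$ much smaller than the scale $\alpha^{0.6}$ (say $\sigma=\sigma(\alpha)$ going to zero fast with $\alpha$, or $\sigma$ a small absolute constant, depending on which regime the downstream application needs), and then split the union $\bigcup_{n=1}^{N(\alpha)+M}\Theta_n$ into blocks of consecutive return times. The key structural input is that $\Theta_n$ is a union of elements of $\mathcal{P}_n$ up to a controlled error: if $\theta\in\Theta_n$ then, because $F^n(X_0|\omega)$ is admissible for the element $\omega\in\mathcal{P}_n$ containing $\theta$, Proposition~\ref{prop:deepreturn} applies to the admissible curve $Y=F^n(X_0|\omega)$ and gives $|\{\theta'\in[0,1): |Y(\theta')|<\alpha\eps\}|\le \eps^{\kappa}$; pulling back by $g^n$ this says that within each such $\omega$ the bad set has relative measure at most $\eps^{\kappa}$ where $\alpha\eps=\sigma$, i.e. $\eps=\sigma/\alpha$.

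The second step is to organize the sum over $n$ using the recurrence times $n_k(\theta)$ of \S\ref{subsec:rec}. On each element $\omega$ of $\mathcal{P}_{n_k}$ on which $n_k\equiv n$, Proposition~\ref{prop:deepreturn} (applied to the admissible curve $F^{n}(X_0|\omega)$) bounds the relative measure of the subset of $\omega$ whose image under $F^{n_k}$ has modulus $<\alpha\eps$ by $\eps^{\kappa}$. One then needs that between consecutive deep returns the orbit gains definite expansion — here I would invoke Lemma~\ref{lem:vianaexpansion} (and Lemma~\ref{lem:expalpha} for the return segment of length $N(\alpha)$) to see that after a deep return to within $\sqrt{\alpha}$ the vertical derivative is $\gtrsim C_0/\alpha$, so that the preimage in $\omega$ of a subsequent $\sigma$-return is geometrically small; summing the geometric series over the at most $M/?$-many return blocks that can occur in a window of length $N(\alpha)+M$ keeps the total under control. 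Concretely: write $\bigcup_{n=1}^{N(\alpha)+M}\Theta_n\subset \bigcup_{k\ge 1}\{\theta: n_k(\theta)\le N(\alpha)+M,\ \theta\in\Theta_{n_k(\theta)}\}$ plus the set where no deep return happens but a mild $\sigma$-return does, and on each piece bound the measure by $\eps^{\kappa}$ times the number of relevant $\mathcal{P}_{n_k}$-elements, which is $1$ after conditioning on $\omega$.

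The third step is the arithmetic: the number of distinct return times $n_k\le N(\alpha)+M$ that can contribute is bounded, because after each deep return the orbit needs at least (roughly) $\log(1/\alpha)/\log\lambda$ iterates to escape $B_\alpha$ again by Lemma~\ref{lem:vianaexpansion}, while $N(\alpha)\asymp \log(1/\alpha)$ by Lemma~\ref{lem:Nsigma}; thus there are $O(1)$ return blocks in the window, each contributing at most $(\sigma/\alpha)^{\kappa}$ to the relative measure within its conditioning element, and a union bound over the $O(1)$ blocks gives total measure $\le O(1)\cdot(\sigma/\alpha)^{\kappa}$, which is $<2/3$ once $\sigma/\alpha$ is chosen small enough (respectively $\sigma$ small enough, using that $\eps_0$ in Proposition~\ref{prop:deepreturn} is absolute).

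\textbf{Main obstacle.} The delicate point is the bookkeeping of return times: unlike in \cite{V,BST}, the images of an admissible curve need not be separated, so one cannot simply say "each $\Theta_n$ is essentially a union of $\mathcal{P}_n$-elements of small total measure and sum." Instead one must condition on the element $\omega\in\mathcal{P}_{n_k}$ of the \emph{previous} return, use that $n_k$ is constant on $\omega$, apply Proposition~\ref{prop:deepreturn} to the admissible curve $F^{n_k}(X_0|\omega)$ to control the next return \emph{relative to} $|\omega|$, and then sum a geometric series whose ratio comes from the expansion estimate — making sure that the window $N(\alpha)+M$ accommodates only boundedly many such steps. Keeping the constants uniform in $\alpha$ (with $N(\alpha)\to\infty$) while the per-step loss stays summable is the crux; everything else is an application of Lemmas~\ref{lem:vianaexpansion}, \ref{lem:expalpha} and Proposition~\ref{prop:deepreturn}.
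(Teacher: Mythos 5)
Your proposal correctly identifies the obstacle (the lack of the separation property of \cite{V}, \cite{BST} for general $\phi$), but the mechanism you propose cannot close the gap, and the crucial idea of the paper's proof is absent. The fatal problem is a scale mismatch. Proposition~\ref{prop:deepreturn} controls $\{|Y|<\alpha\eps\}$ for admissible $Y$ and only for $\eps<\eps_0$ with $\eps_0$ a fixed constant; that is, it only sees neighbourhoods of the critical circle of size $\lesssim\alpha$. But the constant $\sigma$ in Proposition~\ref{prop:return} must be independent of $\alpha$: the statement quantifies $\sigma$ before $\alpha$, and the deduction of Proposition~\ref{prop:return0} needs $\Theta_n$ to capture $\alpha^{0.6}$-returns, so one needs $\sigma\gtrsim\alpha^{0.6}$. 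Thus $\sigma/\alpha\to\infty$ as $\alpha\to 0$, and Proposition~\ref{prop:deepreturn} applied at scale $\sigma$ is vacuous. Worse, no per-$n$ bound is available from non-flatness at all: an admissible curve has $C^0$-oscillation of order $\alpha\ll\sigma$, so if the curve at step $n$ happens to pass within $O(\alpha)$ of $0$, then $\{|Y_n|<\sigma\}$ is \emph{all} of $[0,1)$. Your hedge that perhaps $\sigma=\sigma(\alpha)$ could be taken would break both the statement (and its application to Proposition~\ref{prop:return0}) and in any case cannot reconcile $\sigma\gtrsim\alpha^{0.6}$ with $\sigma<\eps_0\alpha$.

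The paper's proof is built on a pairing argument that your sketch does not touch. It first reduces to the zero curve $Y_n(\theta)=f_n(\theta,0)$ via the distortion estimate $|f_n(\theta,0)-f_n(\theta,X_0(\theta))|<\sigma$ on the relevant window of $n$ (valid because $|X_0|\le 2\alpha^{0.6}$, so the quadratic correction is of order $\alpha^{1.2}$). It then exploits the analyticity and non-constancy of $\phi$ through a Fourier-coefficient computation (Claim 1) to show that the linear-in-$\alpha$ part $Q_{N_1}$ of $Y_{N_1}$ is not $d^{-N_0}$-periodic; hence the rotation $\theta\mapsto\theta':=\theta+d^{-N_1}$ gives $|Y_{N_1}(\theta)-Y_{N_1}(\theta')|\ge\eta\alpha$ on a set $\Omega$ of measure $\ge 0.99$. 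Pushing this separation forward via the shadowing distortion lemma, and then using that the vertical derivative stays $\ge\eps/2$ between $N(\alpha)$ and $N(\alpha)+M$ as long as one orbit eventually enters $[-2\sigma,2\sigma]$, shows that for $\theta\in\Omega$ at most one of $\theta,\theta'$ can lie in $\Theta=\bigcup_{n\le N(\alpha)+M}\{|Y_n|<2\sigma\}$ (Claim 2). Since the rotation preserves Lebesgue measure and $|\Omega|\ge 0.99$, this forces $|\Theta|\le 0.51\le 2/3$. That pairing argument — the reduction to the zero curve, the Fourier non-degeneracy, and the measure-preserving involution — is precisely the replacement for the missing separation property and the point where the real analyticity of $\phi$ is used; it is also what produces the constant $2/3$, which your proposal never accounts for. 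Your "$O(1)$ return blocks'' observation is roughly right (indeed, by the shadowing estimates only $M$ values of $n$ can contribute to the union) but does not repair the scale mismatch above.
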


\begin{proof}
Recall that in \S\ref{sec:shadowing}, we proved that there exist constants $C_0\in (0,1)$ and $\xi_0>0$ such that
the following hold:
\begin{itemize}
\item $C_0\le |(f^{N(\alpha)})'(c_1)|\alpha \le C_0^{-1},$
\item for any $(\theta, x)$ with $|x|<\sqrt{\alpha}$, $|f_n(\theta,
x)|\ge \xi_0$ for all $n=1,2,\ldots, N(\alpha)$.
\end{itemize}

Let $\eps=\eps(M)\in (0,1)$ be a small constant so that for any
$x\in [-\eps, \eps]$ and $1\le k\le M$, $|f^{k}(x)|\ge 2\eps$.
Provided that $\alpha>0$ is small enough, it follows that for any
$x\in [-\eps, \eps]$, $\theta\in \T$ and $1\le k\le M$,
we have $|f_k(\theta, x)|\ge \eps$.

Let $N_0$ be a positive integer such that $\phi$ is not of period
$d^{-N_0}$.
Let $N_1\ge N_0$ be a large integer such that
$$e\frac{A}{C_0}  \cdot 4^M |(f^{N_1-1})'(c_1)|^{-1}\le
\frac{\eps}{2}.$$

Let $Y_j(\theta)=f_j(\theta, 0)$, $j=0,1,\cdots$. Then
$$Y_{N_1}(\theta)=f^{N_1}(0)+\alpha Q_{N_1}(\theta) +O(\alpha^2),$$ where
$$Q_{N_1}(\theta)=\sum_{k=1}^{N_1} (f^{N_1-k})'(f^k(0))\phi(d^{k-1}\theta).$$

\noindent {\bf Claim 1.} $Q_{N_1}(\theta)$ is not of period
$d^{-N_0}$.

In fact, let $$\phi(\theta)=c_0+\sum_{n=1}^\infty (c_n e^{2\pi
in\theta}+\overline{c_n} e^{-2\pi in\theta})$$ be the Fourier series
expansion of $\phi$. Since $\phi$ is not of period $d^{-N_0}$, there
exists a minimal positive integer $m_0$ such that $d^{N_0}\not |
m_0$ and $c_{m_0}\not =0$. Then
$$\int_{0}^1 Q_{N_1}(\theta) e^{-2\pi i m_0\theta} d\theta
= (f^{N_1-1})'(f(0))c_{m_0} \not =0,$$ which implies the claim 1.

For any $\theta\in \T$ let
$\theta'=\theta+ d^{-N_1} (\textrm{mod } 1)$. Since $Q_{N_1}$ is
real analytic and is not of period $1/d^{N_1}$, there exists
$\eta=\eta(N_1)>0$ such that
$$\Omega':=\{\theta\in \T: |Q_{N_1}(\theta)-Q_{N_1}(\theta')|\ge 4\eta\}$$
has Lebesgue measure greater than $0.99$. Let $\Omega$ be the union
of all elements of $\mathcal{P}_{N(\alpha)+M} $ which intersect
$\Omega'$. Clearly, $|\Omega|\ge 0.99$. Provided that $\alpha$ is
small enough, $N(\alpha)\gg N_1$, hence the oscillation of
$Q_{N_1}(\theta)$ on any element of $\mathcal{P}_{N(\alpha)+M}$ is
less than $\eta$. Therefore, for each $\theta\in \Omega$,
$$|Q_{N_1}(\theta)-Q_{N_1}(\theta')|\ge 2\eta.$$
It follows that for all $\theta\in \Omega$,
\begin{equation}\label{eqn:yn1}
\eta \alpha\le |Y_{N_1}(\theta)-Y_{N_1}(\theta')|\le A\alpha,
\end{equation}
provided that $\alpha$ is small enough.

By Lemma~\ref{lem:distortion},
\begin{equation}\label{eqn:ynalpha}
\frac{1}{e} |(f^{N(\alpha)-N_1})'(c_{N_1})|\eta \alpha \le
|Y_{N(\alpha)}(\theta) -Y_{N(\alpha)} (\theta')|\le
e|(f^{N(\alpha)-N_1})'(c_{N_1})| A\alpha.
\end{equation}

For all $n\in [N(\alpha), N(\alpha)+M]$, since $|\frac{\partial
f(\theta,x)}{\partial x}|\le 4$ on $I$, we have
\begin{align}\label{eqn:2eps}
|Y_{n}(\theta)-Y_n (\theta')| &
=|f_{n-N(\alpha)}(g^{N(\alpha)}\theta,Y_{N(\alpha)}(\theta))-
f_{n-N(\alpha)}(g^{N(\alpha)}\theta,Y_{N(\alpha)}(\theta'))| \nonumber\\
&\le 4^{n-N(\alpha)}
|Y_{N(\alpha)}(\theta)-Y_{N(\alpha)}(\theta')| \nonumber\\
& \le
eA 4^M|(f^{N(\alpha)-N_1})'(c_{N_1})| \alpha \nonumber\\
& = e \frac{A}{C_0}\cdot 4^M |(f^{N_1-1})' (c_1)|^{-1}\cdot
C_0|(f^{N(\alpha)})'(c_1)|\alpha \nonumber
\\ & \le \eps/2.
\end{align}

Now let $\sigma>0$ be a constant such that
$$\sigma \le\min\left(\frac{\eta}{4e} \frac{C_0}{|(f^{N_1-1})'(c_1)|}
\left(\frac{\eps}{2}\right)^{M}, \frac{\eps}{4}, \frac{\xi_0}{2}\right).$$

{\bf Claim 2.} For each $\theta\in \Omega$, $\theta$ and $\theta'$
does not simultaneously belong to
$$\Theta=\bigcup_{n=1}^{N(\alpha)+M} \{\theta\in [0,1):
|Y_n(\theta)|<2\sigma\},$$
provided that $\alpha>0$ is small enough.

To prove this claim, let us assume that $|Y_m(\theta)|<2\sigma$ for
some $m\in \{1,\ldots, N(\alpha)+M\}$. As $|Y_j(\theta)|\ge \xi_0\ge 2\sigma$ for $1\le j\le N(\alpha)$,
we have $m> N(\alpha)$. Since
$|Y_m(\theta)|\le \eps/2$, by the choice of $\eps$, we obtain that
for $k\in \{N(\alpha),\ldots, m-1\}$, we have $|Y_k(\theta)|\ge
\eps$, hence $[Y_k(\theta),Y_k(\theta')]$ is disjoint from the
region $ [-\eps/2, \eps/2]$ by \eqref{eqn:2eps}. Therefore
\begin{align*}
|Y_m(\theta)-Y_m(\theta')|
&=|f_{m-N(\alpha)}(g^{N(\alpha)}\theta,Y_{N(\alpha)}(\theta))-
f_{m-N(\alpha)}(g^{N(\alpha)}\theta,Y_{N(\alpha)}(\theta'))| \\
&\ge \left(\frac{\eps}{2}\right)^{m-N(\alpha)}
|Y_{N(\alpha)}(\theta)-Y_{N(\alpha)}(\theta')|\ge
\left(\frac{\eps}{2}\right)^{M}\frac{1}{e}
|(f^{N(\alpha)-N_1})'(c_{N_1})|\eta \alpha\\
&= \frac{\eta}{e} \frac{C_0}{|(f^{N_1-1})'
(c_1)|}\left(\frac{\eps}{2}\right)^{M}\cdot
\frac{|(f^{N(\alpha)})'(c_1)|\alpha}{C_0} \ge 4 \sigma,
\end{align*}
hence $|Y_m(\theta')|> 2\sigma$. On the other hand,
$$|Y_m(\theta')|\le |Y_m(\theta)|+|Y_m(\theta)-Y_m(\theta')|<\eps,$$
which implies that $|Y_k(\theta')|\ge \eps$ for all $1\le k\le
N(\alpha)+M$ except for $k=m$. In conclusion, we obtain that
$\theta'\not\in \Theta$. This finishes the proof of Claim 2.

The claim implies that
$$\left|\Theta
\right|\le 0.51.$$

It is clear that there exists $C_2\ge 1$ such that
$|(f^{k_1})'(c_1)|\le C_2 |(f^{k_2})'(c_1)|$ for any $0\le k_1<k_2$.
For any $\theta\in \Theta_n$, $n=1,2,\ldots, N(\alpha)+M$, we have
\begin{align*}
|f_n(\theta,0)-f_n(\theta,X_0(\theta))|&=|f_{n-1}(g\theta,f_1(\theta,0))-
f_{n-1}(g\theta,f_1(\theta,X_0(\theta)))|\\
&=|\frac{\partial f_{n-1}(g\theta,\xi)}{\partial x}|\cdot |X_0(\theta
)|^2
\text{ for some $\xi\in [f_1(\theta,X_0(\theta )),f_1(\theta,0)]$}\\
&\le \begin{cases} e|(f^{n-1})'(c_1)|\alpha^{1.2}, \ \ \ \ \ \ &n-1\le N(\alpha)\\
4^{n-1-N(\alpha)}e|(f^{N(\alpha)})'(c_1)|\alpha^{1.2}, \ \ \ \ \ \
&n-1\ge N(\alpha)
\end{cases}\\
&\le C_2e4^{M}  |(f^{N(\alpha)})'(c_1)| \alpha ^{1.2} \le  \frac{
4^MC_2 e}{C_0} \alpha ^{0.2} \le \sigma,
\end{align*}
 provided that $\alpha$ is
small enough. Hence $|f_n(\theta, 0)|<2\sigma$. This proves that
$\bigcup_{n=1}^{N(\alpha)+M}\Theta_n\subset \Theta$, completing the
proof of the proposition.
\end{proof}

\subsection{Truncated vertical partial derivative}
\label{subsec:truncate}
Consider an admissible curve $X:[0,1)\to B_\alpha$. Define $n_k(\theta)$ as above.
For each $k=1,2,\ldots$ and $M>0$, define
$$\mathcal{B}_k(M):=\{\theta\in [0,1): n_k(\theta)\le (N(\alpha)+M) k\}.$$
The following is an easy consequence of Proposition~\ref{prop:return0}.

\begin{cor}\label{cor:nklarge}
Given $M>0$, the following holds provided that $\alpha>0$ is small enough.
For each $k=1,2,\ldots$,
we have $|\mathcal{B}_k(M)|\le 0.8^k.$ In particular, for a.e. $\theta\in
[0,1)$, $n_k(\theta)> (N(\alpha) + M) k$ for all $k$ sufficiently
large.
\end{cor}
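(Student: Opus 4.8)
The ``in particular'' assertion follows from the measure bound by Borel--Cantelli: once $|\mathcal{B}_k(M)|\le 0.8^k$ for all $k$, then $\sum_k|\mathcal{B}_k(M)|<\infty$, so a.e.\ $\theta$ lies in $\mathcal{B}_k(M)$ for only finitely many $k$. Thus the task is the bound $|\mathcal{B}_k(M)|\le 0.8^k$, and since enlarging $M$ only enlarges $\mathcal{B}_k(M)$ we may assume $M\ge 2$. Writing $\Delta_j(\theta)=n_j(\theta)-n_{j-1}(\theta)$, the idea is that $\theta\in\mathcal{B}_k(M)$ forces a definite proportion of the gaps $\Delta_1(\theta),\dots,\Delta_k(\theta)$ to be ``short'' (at most $N(\alpha)+M'$ for a suitably large $M'$), while Proposition~\ref{prop:return0} makes having many short gaps exponentially unlikely.

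\textbf{Step 1 (a deterministic gap bound).} The first --- and, I think, least routine --- point is that for $\alpha$ small one has $\Delta_j(\theta)\ge N(\alpha)+1$ for \emph{every} $j$ and every $\theta$ for which $n_j(\theta)$ is defined. Indeed, put $n=n_{j-1}(\theta)$ and let $\omega\in\mathcal{P}_n$ be the atom containing $\theta$ (for $j=1$ take $n=0$, $\omega=[0,1)$); the curve $F^{n}(X|\omega)$ is admissible and, as recorded in the text, takes values in $B_\alpha=[-2\alpha^{0.6},2\alpha^{0.6}]$. Since $2\alpha^{0.6}<\sqrt\alpha$ and $\alpha^{0.6}<\xi_0$ for $\alpha$ small, the shadowing fact recalled in \S\ref{sec:shadowing} (if $|x|<\sqrt\alpha$ then $|f_p(\vartheta,x)|\ge\xi_0$ for $1\le p\le N(\alpha)$), applied fibrewise along this curve, gives $|f_{n+p}(s,X(s))|\ge\xi_0>\alpha^{0.6}$ for all $s\in\omega$ and $1\le p\le N(\alpha)$; hence no return can occur at a time in $(n,n+N(\alpha)]$. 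This is what will make the counting below uniform in $\alpha$: a ``long'' gap will cost at least $N(\alpha)+M'+1$ while every gap is at least $N(\alpha)+1$, a difference of $M'$, independent of $\alpha$.

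\textbf{Step 2 (iterating Proposition~\ref{prop:return0}).} Fix a large integer $M'\ge M$, to be chosen, and set $B_j=\{\theta:\Delta_j(\theta)\le N(\alpha)+M'\}$. Let $\mathcal{Q}_j$ be the partition of $[0,1)$ whose atoms are the elements $\omega$ of $\mathcal{P}_n$ (over all $n\ge 0$) on which $n_j\equiv n$; by the Note preceding the Corollary this is well defined, $n_0,\dots,n_j$ are constant on its atoms, and $B_1,\dots,B_j$ are unions of such atoms. Proposition~\ref{prop:return0} with parameter $M'$ says precisely that $|B_j\cap\omega|\le\tfrac23|\omega|$ for every atom $\omega$ of $\mathcal{Q}_{j-1}$, hence $|B_j\cap A|\le\tfrac23|A|$ for every $\mathcal{Q}_{j-1}$-measurable $A$. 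Since for $j_1<\dots<j_m$ the set $B_{j_1}\cap\dots\cap B_{j_{m-1}}$ is $\mathcal{Q}_{j_m-1}$-measurable, a one-line induction yields $\bigl|\bigcap_{i=1}^m B_{j_i}\bigr|\le(2/3)^m$.

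\textbf{Step 3 (counting and conclusion).} Suppose $\theta\in\mathcal{B}_k(M)$, i.e.\ $\sum_{j=1}^k\Delta_j(\theta)\le (N(\alpha)+M)k$, and let $g$ be the number of ``long'' indices $j\le k$ with $\Delta_j>N(\alpha)+M'$. By Step~1, $\sum_{j=1}^k\Delta_j\ge g(N(\alpha)+M'+1)+(k-g)(N(\alpha)+1)=kN(\alpha)+gM'+k$, so $gM'\le (M-1)k$. Choosing $\eps_0\in(0,\tfrac12)$ small (its size fixed below) and then $M':=\max\bigl(M,\lceil (M-1)/\eps_0\rceil\bigr)$, we get $g\le\eps_0 k$; hence $\theta$ lies in $B_j$ for at least $k-\lfloor\eps_0 k\rfloor$ indices, so $\mathcal{B}_k(M)\subseteq\bigcup_{|S|=k-\lfloor\eps_0 k\rfloor}\bigcap_{j\in S}B_j$ and therefore, by Step~2,
\[
|\mathcal{B}_k(M)|\le\binom{k}{\lfloor\eps_0 k\rfloor}\Bigl(\tfrac23\Bigr)^{\,k-\lfloor\eps_0 k\rfloor}\le\Bigl(e^{H(\eps_0)}(2/3)^{1-\eps_0}\Bigr)^{k},
\]
where $H$ denotes the (natural-log) binary entropy function and we used $\binom{k}{j}\le e^{kH(j/k)}$ and the monotonicity of $H$ on $[0,\tfrac12]$. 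Since $e^{H(\eps_0)}(2/3)^{1-\eps_0}\to 2/3<0.8$ as $\eps_0\to 0^+$, fix $\eps_0$ so small that this quantity is $\le 0.8$; then take $\alpha$ small enough that Step~1 and Proposition~\ref{prop:return0} with parameter $M'$ both hold. The displayed inequality gives $|\mathcal{B}_k(M)|\le 0.8^k$ whenever $\lfloor\eps_0 k\rfloor\ge 1$, while for the remaining (finitely many) $k$ one has $g=0$, so $\mathcal{B}_k(M)\subseteq\bigcap_{j=1}^k B_j$ and $|\mathcal{B}_k(M)|\le(2/3)^k\le 0.8^k$. The only genuine obstacles are realizing that the deterministic bound of Step~1 is needed (without it the bound $g\le\eps_0 k$ degenerates as $\alpha\to 0$) and arranging the partitions $\mathcal{Q}_j$ so that Proposition~\ref{prop:return0} iterates cleanly; the rest is bookkeeping.
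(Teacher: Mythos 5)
Your proof is correct and takes essentially the same route as the paper's: iterate Proposition~\ref{prop:return0} along the nested return-time partitions to bound intersections of ``short-gap'' events by $(2/3)^m$, use the a priori lower bound on the gaps to show that $\theta\in\mathcal{B}_k(M)$ forces all but a small fraction of the gaps to be short relative to an enlarged parameter (your $M'$ plays the role of the paper's $LM$), and finish with a binomial count. The only additions are cosmetic: your Step~1 justifies the gap bound $n_{j+1}-n_j\ge N(\alpha)$ via the shadowing estimates, which the paper simply asserts, and you make the combinatorial inequality explicit via an entropy bound where the paper just posits a suitable $L$.
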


\begin{proof} Let $L>0$ be a large number such that
\begin{equation}\label{eqn:L}
\sum_{(1-L^{-1})k\le t\le k}\binom{k}{t}\left(\frac{2}{3}\right)^t\le 0.8^k
\end{equation}
holds for all positive integers $k$.

For any finite sequence of integers $0\le k_1<k_2<\cdots< k_t$, let
$$\mathcal{D}(k_1,k_2,\cdots, k_t)=\{\theta\in[0,1): n_{k_i+1}(\theta)-n_{k_i}(\theta)\le N(\alpha) +LM, 1\le i\le t\}.$$
We claim that
\begin{equation}\label{eqn:calD}
|\mathcal{D}(k_1,k_2,\cdots, k_t)|\le (2/3)^t,
\end{equation} provided that $\alpha>0$ is small enough.

To prove the claim,  let $\mathcal{F}_k^n$, $k\ge 0$, $n\ge 0$, denote the collection of elements of $\mathcal{P}_n$ on which $n_k(\theta)=n$, and let $\mathcal{F}_k=\bigcup_{n=0}^\infty \mathcal{F}_k^n$.
Let $D_k$ denote the union of elements of $\mathcal{F}_k$ which intersects $\mathcal{D}(k_1,k_2,\cdots,k_t)$. Clearly $D_{k+1}\subset D_k$ for all $k\ge 0$. By Proposition~\ref{prop:return0} (with $M$ replaced by $LM$), for each $1\le i\le t$, and each element $\omega$ of $\mathcal{F}_{k_i}$, we have
$|D_{k_i+1}\cap \omega |\le \frac{2}{3}|\omega|$, provided that $\alpha>0$ is small enough. Thus $|D_{k_i+1}|\le (2/3) |D_{k_i}|$.
Since $\mathcal{D}(k_1,k_2,\ldots, k_t)\subset D_{k_t+1}$, the inequality (\ref{eqn:calD}) follows.

Now let us complete the proof of the corollary. If $\theta\in\mathcal{B}_k(M)$, then
$$\sum_{j=0}^{k-1} (n_{j+1}(\theta)-n_j(\theta)-N(\alpha))\le kM.$$
As $n_{j+1}(\theta)-n_j(\theta)\ge N(\alpha)$ holds for all $j$, it follows that
$$\#\{0\le j< k: n_{j+1}(\theta) -n_j(\theta) > LM+N(\alpha)\}\le \frac{k}{L}.$$
Thus there exist integers $t\ge (1-L^{-1})k$ and $0\le k_1<k_2<\cdots<k_t$ such that $\theta\in \mathcal{D}(k_1,k_2,\ldots, k_t)$. By (\ref{eqn:calD}), $|\mathcal{B}_k(M)|$ is bounded from above by the left hand side of (\ref{eqn:L}). The corollary is proved.
\end{proof}

For $(\theta, x)\in \T\times \R$, and $n\ge 1$, define
\begin{equation}
 \frac{\hat \partial f_n (\theta, x)}{\partial x}=\prod_{i=0}^{n-1} \max \left(
2|f_i(\theta, x)|, 2\alpha\right).
\end{equation}

\begin{prop}\label{prop:hatd}
There exists $\lambda_1>1$ such that for each positive integer $M$,
the following holds provided that $\alpha$ is small enough. Let $X:
[0,1)\to B_\alpha$ be an admissible curve. Then for a.e. $\theta\in
[0,1)$,
$$\left|\frac{\hat \partial f_n (\theta, X(\theta))}{\partial x}\right|\ge \lambda_1^{nM/N(\alpha)}.$$
holds for all $n$ large.
\end{prop}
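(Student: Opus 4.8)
The plan is to decompose the product defining $\hat\partial f_n/\partial x$ according to the successive return times $n_k(\theta)$ to the region $\T\times B_\alpha$, and to show that each return block contributes a definite multiplicative gain on average, using the shadowing estimates of \S\ref{sec:shadowing} together with Corollary~\ref{cor:nklarge}. Fix $M$ large (to be adjusted at the end) and pass to a full-measure set of $\theta$ on which, by Corollary~\ref{cor:nklarge}, $n_k(\theta)>(N(\alpha)+M)k$ for all $k$ sufficiently large; in particular the number $k(n)$ of returns before time $n$ satisfies $k(n)\le n/(N(\alpha)+M)$, and we will see the reverse-type bound $k(n)\gtrsim n/N(\alpha)$ is what feeds the exponent $nM/N(\alpha)$.

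The key local estimate is that between two consecutive returns the truncated derivative grows by at least a factor $\sim C_0/\alpha$ times $(\text{something})^{M}$. Concretely, suppose $n_k(\theta)=p$ and $n_{k+1}(\theta)=q$, so that $|f_p(\theta,X(\theta))|\le \alpha^{0.6}$ and $q-p\ge N(\alpha)+M$ (on the good set), while $|f_j(\theta,X(\theta))|\ge \alpha^{0.6}$ for $p<j<q$ except possibly near $q$. First, the single step at time $p$: the factor $\max(2|f_p|,2\alpha)\ge 2\alpha$. Next, over the following $N(\alpha)$ steps, the orbit shadows the critical orbit $c_1,c_2,\dots$ as in Lemmas~\ref{lem:distortion}--\ref{lem:expalpha}: since $|f_p(\theta,X(\theta))|\le\alpha^{0.6}<\sqrt\alpha$, Lemma~\ref{lem:expalpha} gives $|\partial f_{N(\alpha)}(f_1(\cdot))/\partial x|\ge C_0/\alpha$, and because on this block $2|f_j|\ge 2\alpha^{0.6}$ dominates the truncation while distortion is controlled, one gets $\prod_{j=p+1}^{p+N(\alpha)}\max(2|f_j|,2\alpha)\ge c\,(C_0/\alpha)$ for a universal $c>0$ (after absorbing the harmless factor $|f_1|\asymp a$ and the bounded distortion constant $e$). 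Combining, the block from $p$ to $p+N(\alpha)$ contributes at least $c' C_0$ (the $\alpha$ from $2\alpha$ cancelling the $1/\alpha$). Finally the remaining $q-p-N(\alpha)\ge M$ steps: on these, $|f_j|\ge\eps(M)$ for the constant $\eps$ from the proof of Proposition~\ref{prop:return}, so each contributes a factor $\ge 2\eps$, giving $(2\eps)^M$. Hence each full return block multiplies $\hat\partial f_n/\partial x$ by at least $\Lambda:=c' C_0 (2\eps)^M$.

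Now choose $M$ so large that $\Lambda=c'C_0(2\eps)^M>1$ — note $\eps=\eps(M)$ depends on $M$, but since $f^k(0)\ne 0$ one has $\eps(M)$ bounded below along a sequence (or simply replace $2\eps$ by a fixed $\rho\in(0,2)$ and note that only the \emph{last} $M$ steps of a block, where the orbit is within $[-\eps,\eps]$, need this bound, while steps with $|f_j|\ge\eps_0>\eps$ give a factor $\ge 2\eps_0>1$), so that $(2\eps(M))^M\to\infty$ and $\Lambda>1$ for $M$ large; set $\lambda_1=\Lambda^{1/2}>1$, say, to leave room. For a.e.\ $\theta$ and $n$ large we have at least $k(n)\ge \lfloor n/(N(\alpha)+M')\rfloor$ completed return blocks for a suitable auxiliary $M'$, hence
$$\left|\frac{\hat\partial f_n(\theta,X(\theta))}{\partial x}\right|\ge \prod_{\text{blocks}}(\text{block factor})\ge \Lambda^{k(n)}\ge \Lambda^{cn/N(\alpha)}\ge \lambda_1^{nM/N(\alpha)},$$
where the last inequality holds after renaming constants and shrinking $\lambda_1$ if necessary (the steps not lying in any completed block are at most $N(\alpha)+M'$ many at the tail and contribute a bounded-below-by-$(2\alpha)^{N(\alpha)}$-type factor which is swallowed for $n$ large). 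I would organize the write-up so that the dependence $M\leftrightarrow M'\leftrightarrow\lambda_1$ is fixed at the start: given the target $M$, apply Corollary~\ref{cor:nklarge} with a larger $\tilde M$, then $\lambda_1$ is extracted from the block gain.

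The main obstacle is the bookkeeping at the boundaries of the return blocks — in particular, handling the last few steps of a block where $|f_j(\theta,X(\theta))|$ may be as small as $\alpha^{0.6}$ (so the truncation $2\alpha$ is active and a single step contributes only the tiny factor $2\alpha$) without destroying the accumulated gain. This is exactly why the shadowing estimate (Lemma~\ref{lem:expalpha}) must be applied to the \emph{whole} $N(\alpha)$-block as a unit: individually some factors are $\sim\alpha$, but their product is $\gtrsim C_0/\alpha$. A secondary subtlety is that Corollary~\ref{cor:nklarge} controls $n_k$ from below but a clean lower bound on the \emph{number} of returns before time $n$ requires also that $n_{k+1}-n_k$ is not too large on average — but this is automatic since $n_{k+1}-n_k\ge N(\alpha)$ always, giving $k(n)\ge n/N(\alpha) - O(1)$ with no upper-bound hypothesis needed; one must just be careful that the good-set condition $n_k>(N(\alpha)+M)k$ does not conflict with wanting many blocks (it doesn't: it only says blocks are on average longer than $N(\alpha)$, which is consistent with $k(n)\ge cn/N(\alpha)$ for an appropriate $c<1$, and the surplus length $M$ per block is precisely the source of the exponent).
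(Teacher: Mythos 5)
Your overall plan (decompose by return times $n_k$, use Lemma~\ref{lem:expalpha} to show the $N(\alpha)$ steps after a return give a factor $\gtrsim C_0/\alpha$ which cancels the truncation penalty $2\alpha$ at the return, and use Corollary~\ref{cor:nklarge} to control the frequency of returns) is the right general shape, but the way you assemble the pieces has two errors that each break the argument.

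First, you never invoke Lemma~\ref{lem:vianaexpansion}, and without it the "remaining $q-p-N(\alpha)$ steps" of a block cannot be controlled. You assert $|f_j|\ge\eps(M)$ there, but that is not available: between returns the only lower bound the definition of $n_k$ gives is $|f_j|>\alpha^{0.6}$, which tends to $0$ with $\alpha$, so each individual factor $\max(2|f_j|,2\alpha)$ may be as small as $2\alpha^{0.6}$ and the per-step bound $(2\eps)^{\text{length}}$ is false (and would in any case be a \emph{decreasing} function of the block length, whereas a longer excursion should give \emph{more} expansion). The point of Lemma~\ref{lem:vianaexpansion} is precisely that the \emph{product} of these possibly-small factors is bounded below by $K\lambda^{\text{length}}$, so the correct per-block bound is $\gtrsim KC_0\,\lambda^{\,n_{k+1}-n_k-N(\alpha)-1}$, not a constant $\Lambda$.

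Second, and more seriously, you need a \emph{lower} bound $k(n)\gtrsim n/N(\alpha)$ on the number of returns before time $n$, and you claim it follows from $n_{k+1}-n_k\ge N(\alpha)$. That inequality goes the wrong way: a lower bound on the gaps gives only the \emph{upper} bound $k(n)\le n/N(\alpha)$. There is in fact no lower bound on $k(n)$ at all — the orbit can stay away from the critical circle for arbitrarily long, and $n_k$ may even be undefined for $k$ large. Consequently the final chain $\hat\partial f_n\ge\Lambda^{k(n)}\ge\Lambda^{cn/N(\alpha)}$ is unsupported. The exponent $nM/N(\alpha)$ does not come from multiplying many block gains; it comes from writing the total as $\bigl(KC_0/\lambda^{N(\alpha)+1}\bigr)^k\lambda^n$ (using the Lyapunov factor over each block and the tail), noting that the base $KC_0/\lambda^{N(\alpha)+1}<1$, and then using the \emph{upper} bound on $k$ from Corollary~\ref{cor:nklarge} (applied with $M$ replaced by $M+M_*+1$, where $M_*$ is fixed so that $KC_0\lambda^{M_*}\ge1$) to make the penalty term no smaller than $\lambda^{-n(N(\alpha)+1)/(N(\alpha)+M+M_*+1)}$, leaving a surplus $\lambda^{nM/(N(\alpha)+M+M_*+1)}\ge\lambda_1^{nM/N(\alpha)}$ for $\alpha$ small. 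So the direction of the inequality on $k(n)$ that you need is the one Corollary~\ref{cor:nklarge} actually provides.
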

\begin{proof} Let $K>0$ and
$\lambda>1$ be as in Lemma~\ref{lem:vianaexpansion} and let $C_0$ be
as in Lemma~\ref{lem:expalpha}. Take $M_*\in \mathbb{N}$ with
$\lambda^{M_*}KC_0\ge 1$. Define $n_1, n_2, \ldots$ as above and fix
$M$. We only need to consider those $\theta$ for which $n_k(\theta)$
is defined for all $k\ge 0$. By Corollary~\ref{cor:nklarge}, for a.e. $\theta\in
[0,1)$, $n_k(\theta)\ge k(N(\alpha)+ M+M_*+1)$ for all $k$ sufficiently large. Let us fix such a
$\theta$ and prove that the expansion estimate holds.
Take $L=L(\theta)>0$ such that  $n_k(\theta)\ge k(N(\alpha)+ M+M_*+1)$ for all $k\ge L$.
Write
$z_i=F^i(\theta, X(\theta))$ and $n_i=n_i(\theta)$. By
Lemma~\ref{lem:vianaexpansion},
\begin{align}\label{eq-gj0}
|\frac{\partial
f_{n-n_k-1}(z_{n_k+1})}{\partial x}|\ge
K\alpha^{0.6}\lambda^{n-n_k-1}
\end{align} for $n\in [n_k, n_{k+1}]$ and
\begin{align}\label{eq-gj1}
\left|\frac{\partial
f_{n_{k+1}-n_k-N(\alpha)-1}(z_{n_k+N(\alpha)+1})}{\partial x}
\right| \ge K \lambda^{n_{k+1}-n_k-1 -N(\alpha)}.
\end{align}
By Lemma~\ref{lem:expalpha},
\begin{align}\label{eq-gj2}
\left|\frac{\partial f_{N(\alpha)} (z_{n_k+1})}{\partial
x}\right|\ge \frac{C_0}{\alpha}.
\end{align}
By \eqref{eq-gj1} and \eqref{eq-gj2},
\begin{align}\label{eq-gj3}
\left|\frac{\partial f_{n_{k+1}-n_k-1} (z_{n_k+1})}{\partial
x}\right|\ge \frac{C_0K}{\alpha}\lambda^{n_{k+1}-n_k-1 -N(\alpha)}.
\end{align}
 Thus for $n_k\le n< n_{k+1}$ with $k\ge L$, using \eqref{eq-gj0} and \eqref{eq-gj3} we have
\begin{align*}
\left|\frac{ \hat \partial f_n (\theta, X(\theta))}{\partial
x}\right|&=\left|\frac{  \partial f_n (\theta, X(\theta))}{\partial
x}\right|\cdot \prod_{i=0}^k\frac{\alpha}{|f_{n_i}(\theta,X(\theta))|}\\
&= \alpha^{k+1} \left|\frac{ \partial f_{n-n_k-1}
(z_{n_k+1})}{\partial x}\right|\cdot \prod_{i=0}^{k-1}
\left|\frac{\partial f_{n_{i+1}-n_i-1} (z_{n_i+1})}{\partial
x}\right|
\\ &
\ge \alpha^{1.6} K^{k+1} C_0^k
\lambda^{n-kN(\alpha)-(k+1)}=\alpha^{1.6} K \lambda^{-1}\left(\frac{KC_0}{\lambda^{N(\alpha)+1}}\right)^k
\lambda^n\\
&\ge \alpha^{1.6} K\lambda^{-1}
\left(\frac{KC_0}{\lambda^{N(\alpha)+1}}\right)^{\frac{n}{N(\alpha)+M+M_*+1}}
\lambda^n   \ \ \ \  \text{(when $\alpha$ is small enough)}\\
&= \alpha^{1.6} K\lambda^{-1} (KC_0\lambda^{M_*}
\lambda^M)^{\frac{n}{N(\alpha)+M+M_*+1}}\ge
\alpha^{1.6} K\lambda^{-1} \cdot \lambda^{\frac{nM}{N(\alpha)+M+M_*+1}} \\
&\ge \alpha^{1.6} K\lambda^{-1} \cdot
\lambda^{\frac{nM}{2N(\alpha)}} \ \ \ \ \ \ \ \ \ \ \ \ \ \ \ \ \ \
\ \ \ \ \ \ \ \ \ \text{(when $\alpha$ is small enough)}.
\end{align*}
Taking $\lambda_1=\lambda^{\frac{1}{3}}$  gives us the desired
estimate.
\end{proof}

\section{Exclusion of bad values}
We analyze returns of points to the region $\T\times (-\alpha,
\alpha)$. Using a large deviation argument from \cite{V}, we obtain
bounds for such deep returns which, together with the estimate given
by Proposition~\ref{prop:hatd}, shows that $F$ has a positive
vertical Lyapunov exponent almost everywhere.

 Write
$$\chi_{-}(\theta, x)=\liminf_{n\rightarrow +\infty} \frac{1}{n} \log \left|\frac{\partial f_n(\theta, x)}{\partial
x}\right|.$$

\begin{prop}\label{prop:key}
Let  $X: [0,1)\to B$ be an admissible curve. Then for a.e.
$\theta\in [0,1)$, $\chi_{-}(\theta, X(\theta))>0$.
\end{prop}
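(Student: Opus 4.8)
The plan is to combine the multiplicative lower bound on the truncated vertical derivative from Proposition~\ref{prop:hatd} with a large deviation estimate controlling how often, and how deeply, the orbit of a point on an admissible curve enters the forbidden region $\T\times(-\alpha,\alpha)$. Write $V_n(\theta)=\partial f_n(\theta, X(\theta))/\partial x$. By definition of the truncated derivative, $|V_n(\theta)|$ differs from $|\hat\partial f_n(\theta,X(\theta))/\partial x|$ only through the factors $i$ for which $|f_i(\theta,X(\theta))|<\alpha$, where the true factor $2|f_i(\theta,X(\theta))|$ has been replaced by the larger $2\alpha$; more precisely,
$$
\log\left|\frac{\partial f_n(\theta,X(\theta))}{\partial x}\right|
= \log\left|\frac{\hat\partial f_n(\theta,X(\theta))}{\partial x}\right|
+ \sum_{\substack{0\le i<n\\ |f_i(\theta,X(\theta))|<\alpha}} \log\frac{|f_i(\theta,X(\theta))|}{\alpha}.
$$
So the only thing standing between Proposition~\ref{prop:hatd} and a positive lower bound for $\chi_-$ is an almost-everywhere upper bound on $\frac{1}{n}\sum_i \log(\alpha/|f_i(\theta,X(\theta))|)$, the sum being over deep returns $|f_i|<\alpha$ in time $[0,n)$.

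First I would set up the bookkeeping for deep returns. For each $\theta$ and each deep return time $i$ (meaning $|f_i(\theta,X(\theta))|<\alpha$) record the ``depth'' $t_i(\theta)\ge 0$ defined by $|f_i(\theta,X(\theta))|\in [\alpha e^{-(t_i+1)}, \alpha e^{-t_i})$, so that the corresponding contribution to the sum above is at most $t_i+1$. The admissible curve passes through $(\theta, X(\theta))$; applying Proposition~\ref{prop:deepreturn} to the admissible curve $f_i(\cdot, X(\cdot))$ restricted to each element of $\mathcal{P}_i$ (each such restriction is itself admissible), the proportion of $\theta$ in a given element with $|f_i(\theta, X(\theta))| < \alpha e^{-t}$ is at most $(e^{-t})^{\kappa}$. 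Summing over the elements, $|\{\theta: t_i(\theta)\ge t\}|\le e^{-\kappa t}$ for every fixed $i$. Moreover the depths decay: the essential point, already used in the proof of Proposition~\ref{prop:hatd}, is that between two consecutive deep returns $n_k<n_{k+1}$ the orbit picks up expansion at least $\lambda^{n_{k+1}-n_k-N(\alpha)-1}$ times a factor $C_0/\alpha$ from Lemma~\ref{lem:expalpha}, while a deep return of depth $t$ loses at most a factor $e^{t}/\,$(something). So either the depths are $O(\log(1/\alpha))=O(N(\alpha))$-sized and there are few of them (by Corollary~\ref{cor:nklarge} the deep-return times satisfy $n_k\gtrsim kN(\alpha)$ for a.e.\ $\theta$ once $k$ is large, so the number of deep returns before time $n$ is $O(n/N(\alpha))$), or a large-deviation phenomenon controls the rare deep ones.

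The core estimate is therefore a large deviation bound: I claim that for any $\beta>0$, for $\alpha$ small,
$$
\left|\left\{\theta\in[0,1): \sum_{\substack{0\le i<n\\|f_i(\theta,X(\theta))|<\alpha}} \big(t_i(\theta)+1\big) \ge \beta\, \frac{n}{N(\alpha)}\,\log\frac1\alpha\right\}\right| \le \gamma^{n/N(\alpha)}
$$
for some $\gamma=\gamma(\beta)<1$. Granting this with $\beta$ chosen small relative to the exponent $M/(2N(\alpha))\cdot \log\lambda$ produced by Proposition~\ref{prop:hatd} (recall $M$ is at our disposal and $N(\alpha)\log\lambda$ is comparable to $\log(1/\alpha)$ up to constants via Lemma~\ref{lem:Nsigma}), Borel–Cantelli gives that for a.e.\ $\theta$ the correction term is eventually smaller than half the main term, hence $\frac1n\log|V_n(\theta)|\ge \frac{M\log\lambda}{4N(\alpha)}+o(1)>0$, proving $\chi_-(\theta,X(\theta))>0$. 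To prove the large deviation bound itself I would follow the strategy of Lemma~2.6/Section on large deviations in \cite{V}: decompose according to the combinatorial data $(n_1<\cdots<n_k)$ of deep return times up to $n$ together with the depths $(t_1,\dots,t_k)$; for fixed such data, use Proposition~\ref{prop:deepreturn} elementwise over $\mathcal{P}_{n_j}$ to bound the measure of the corresponding set by $\prod_j e^{-\kappa t_j}$ (the nesting of partitions makes these bounds multiply, exactly as in the proof of Corollary~\ref{cor:nklarge}); then sum over all admissible data. The number of choices of $(n_1,\dots,n_k)$ in $[0,n)$ is at most $\binom{n}{k}$, and since $k\le n/N(\alpha)\cdot(1+o(1))$ for the relevant $\theta$ this is $\le e^{o(n/N(\alpha))\log N(\alpha)}$... the bookkeeping of entropy-vs-measure is delicate but is exactly the computation carried out in \cite{V}, so I would cite it after adapting the notation.

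The main obstacle I anticipate is precisely this last combinatorial sum: making sure that the entropy cost of summing over all possible patterns of deep-return times and depths is strictly beaten by the measure decay $e^{-\kappa t_j}$ and by the sparsity $k\lesssim n/N(\alpha)$ of deep returns. This requires the parameter $\alpha$ to be small enough that $N(\alpha)\to\infty$ dominates the logarithmic entropy factors, and it requires feeding in Corollary~\ref{cor:nklarge} to restrict attention (up to an exponentially small set of $\theta$) to patterns with $k$ genuinely of order $n/N(\alpha)$ rather than order $n$. Once that balance is in place, the rest is the soft argument sketched above.
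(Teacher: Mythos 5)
Your high-level strategy coincides with the paper's: decompose $\log|V_n|$ into the truncated derivative (Proposition~\ref{prop:hatd}) plus the loss from deep returns, and then prove a large deviation estimate on that loss so that Borel--Cantelli lets you win. The correspondences you identify (Proposition~\ref{prop:deepreturn} for the pointwise tail of a single return depth, Corollary~\ref{cor:nklarge} for the sparsity $k_n\lesssim n/N(\alpha)$, Lemma~\ref{lem:Nsigma} to relate $N(\alpha)$ and $\log(1/\alpha)$) are exactly the ingredients the paper uses.

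However, there is a genuine gap precisely where you flag the ``main obstacle''. The paper's version of your large deviation claim is its Lemma~\ref{lemma:8}, and its proof is not a routine re-run of Viana's computation; two devices are essential and absent from your sketch. First, the paper truncates the depth: it sets $q_k=0$ unless the $k$-th return is deeper than $\Delta=\log_d(1/\eps_1)$, and then bounds $\sum_{k\le K} q_k$ by $K$ itself, with no extra $\log(1/\alpha)$ factor. Your threshold $\beta\,(n/N(\alpha))\log(1/\alpha)$ forces $\beta$ to shrink like $1/N(\alpha)$ to balance against the gain $\lambda_1^{Mn/N(\alpha)}$ from Proposition~\ref{prop:hatd}, and then one must check that the rate $\gamma(\beta)$ in your tail bound still yields a summable series as $\beta\to 0$; the paper's normalization removes this delicacy from the start. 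Second, and more seriously, the scheme you describe for proving the tail bound --- ``decompose according to the combinatorial data of return times and depths; the nesting of partitions makes the bounds multiply; sum over all data'' --- fails as stated. For the per-return estimates from Proposition~\ref{prop:deepreturn} to multiply, you need $\mathcal{P}_{n_{k+1}}$ to refine $\mathcal{P}_{n_k+r_k}$, i.e.\ $n_{k+1}-n_k\ge r_k$; but consecutive returns are only guaranteed to be $N(\alpha)$ apart, while the depth $r_k$ can be arbitrarily large (its tail only decays like $d^{-\kappa r_k}$), so the nesting can fail. The paper resolves this by a subsampling trick: it works along arithmetic progressions $\mathcal{A}_p$ of step $m=[\sqrt{K}]$, so consecutive sampled returns are $\ge mN(\alpha)$ apart, which dominates the capped depth $r_k\le [\sqrt{K}]=m$ (the cap is supplied by splitting off $E_K^2$). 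This is also what makes the combinatorial sum controllable, at the price of a bound that is only $d^{-c\sqrt K}$ rather than exponential in $K$. Since that subsampling idea is the heart of the missing lemma, your proposal, while pointed in the right direction and honest about where it is incomplete, does not actually establish the needed estimate.
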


\begin{proof}[Proof of the Theorem \ref{thm:main}.]
It suffices to prove that $\chi_{-}(\theta, x)>0$ for a.e. $(\theta,
x)\in \T\times \R$. Since $|\frac{
\partial f_1(\theta,x)}{\partial x}|>\beta>1$ outside of $I$, we only need to consider
$(\theta, x)\in \T\times B$.
By Fubini's theorem, we only need to show that for any $x\in
B\setminus \{0\}$, the set $$Q(x)=\{\theta\in \T: \chi_{-}(\theta,
x)\le 0\}$$ has measure zero. Let $n_0$ be as in Lemma
\ref{lem:lyapunov} and let $\omega$ be an element of
$\mathcal{P}_{n_0+1}$. Then by Lemma \ref{lem:horizontal},
$\theta\mapsto f_{n_0+1}((g^{n_0+1}|\omega)^{-1}(\theta),x)$ defines an
admissible curve. By Proposition~\ref{prop:key}, we have
$$\chi_{-}(F^{n_0+1}(\theta, x))>0$$ for a.e. $\theta\in \T.$
Clearly, for each $j=0,1,\ldots, n_0$, there are only finitely
many $\theta$ for which $f_j(\theta, x)=0$. Thus $\chi_{-}(\theta,
x)=\chi_{-}(F^{n_0+1}(\theta, x))>0$ for a.e. $\theta\in \T$.
\end{proof}

Let us turn to the proof of Proposition~\ref{prop:key}. Without loss
of generality, let us consider an admissible curve $X:[0,1)\to
B_\alpha$, where $B_\alpha= [-2\alpha^{0.6}, 2\alpha^{0.6}]$ as
before. Let
$$Q:=\{\theta\in [0,1): f_i(\theta, X(\theta))=0\mbox{ for some
}i=0,1,\ldots\},$$ which is a countable set.
As in \S~\ref{sec:recurrence}, we define
$0=n_0(\theta)<n_1(\theta)<n_2(\theta)<\cdots$.


Let $\eps_0>0$ be as in Proposition~\ref{prop:deepreturn}. Let
$\eps_1\in (0, \eps_0)$ be a small constant and let
\begin{equation}\label{eqn:delta}
\Delta=\Delta(\eps_1):=\log_d \frac{1}{\eps_1}.
\end{equation}
For any $\theta\in [0,1)\setminus Q$ and $k\ge 0$, if $n_k(\theta)$
is defined, then we define $\hat q_k(\theta)$ to be the integer such
that
$$d^{-\hat q_k(\theta)+1} \alpha> |f_{n_k(\theta)}(\theta, X(\theta))|\ge d^{-\hat q_k(\theta)}\alpha.$$
Moreover, define $q_k(\theta)=0$ if $d^{-{(\hat q_k(\theta)-1)}}\ge
\eps_1$ and $q_k(\theta)=\hat q_k(\theta)$ otherwise. If
$n_k(\theta)$ is not defined, then we set $q_k(\theta)=0$.

Therefore $q_k(\theta)>0$ implies that
\begin{equation}\label{eqn:qkdelta}
q_k(\theta)> \Delta.
\end{equation}

For $c>0$ and $K\ge 1$, let
$$E_K(c):=\{\theta\in [0,1)\setminus Q: \sum_{k=0}^K
q_k(\theta) \ge cK\}.$$

\begin{lem} \label{lemma:8}For any $c>0$, the following holds for all $K$ large:
 $$|E_K(c)|\le d^{-\frac{\kappa}{6} c \sqrt{K}},$$
 provided that $\eps_1$ was chosen small enough,
 where $\kappa$ is as in Proposition
\ref{prop:deepreturn}.
\end{lem}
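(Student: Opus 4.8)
The plan is to run a large deviation argument in the spirit of Lemma 2.6 in \cite{V}, using Proposition~\ref{prop:deepreturn} as the basic per-return probabilistic input and the time structure $n_0(\theta)<n_1(\theta)<\cdots$ to organize the estimate. First I would record the key one-step bound: for a fixed element $\omega$ of $\mathcal{P}_{n_k}$ on which $n_k=n_k(\theta)$ is constant, the curve $F^{n_k}(X|\omega)$ is admissible, so Proposition~\ref{prop:deepreturn} applied to it (rescaled to $\omega$) controls the relative measure of $\{\theta\in\omega: q_k(\theta)\ge q\}$. Indeed, $q_k(\theta)\ge q$ with $q>\Delta$ forces $|f_{n_k}(\theta,X(\theta))|<d^{-q+1}\alpha$, so setting $\eps=d^{-q+1}$ we get
\[
\frac{|\{\theta\in\omega: q_k(\theta)\ge q\}|}{|\omega|}\le d^{(-q+1)\kappa}
\]
once $d^{-q+1}<\eps_0$, i.e.\ once $q>\Delta$ with $\eps_1$ small; and $q_k$ takes only the value $0$ or a value exceeding $\Delta$. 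Summing a geometric series, $\mathbb{E}[d^{s q_k(\theta)}\mid \mathcal{F}_k]$ is bounded (say by $2$) for a suitable fixed $s=s(\kappa)>0$ and all $q_k$-thresholds, provided $\eps_1$ is small enough that $\Delta$ is large; this is the conditional moment generating function bound that drives everything.

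Next I would set up the martingale/Markov structure. Let $\mathcal{F}_k$ be the $\sigma$-algebra generated by the partition into elements $\omega$ of $\mathcal{P}_{n_k(\cdot)}$ on which $n_k$ is constant (the family $\mathcal{F}_k$ from the proof of Corollary~\ref{cor:nklarge}); these are nested. On each such $\omega$, $q_k$ is not $\mathcal{F}_k$-measurable but its conditional law is governed by the admissible curve $F^{n_k}(X|\omega)$, so the bound above gives $\mathbb{E}\!\left[\prod_{k=0}^{K}d^{s q_k}\right]\le 2^{K+1}$ by iterated conditioning (condition on $\mathcal{F}_K$, then $\mathcal{F}_{K-1}$, and so on). Here I should also handle the boundary case where $n_k(\theta)$ is undefined for some $k$: then $q_k=0$ and contributes a factor $1$, so the estimate is unaffected. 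By Markov's inequality,
\[
|E_K(c)|=\left|\left\{\textstyle\sum_{k=0}^{K}q_k\ge cK\right\}\right|\le 2^{K+1} d^{-s c K}.
\]
This already gives exponential decay in $K$ once $c$ is larger than a constant times $\log_d 2 /s$, but the statement demands decay for \emph{every} $c>0$, with rate $d^{-(\kappa/6)c\sqrt K}$.

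To get the claim for all $c>0$, I would exploit that $\eps_1$ (hence $\Delta$) is still at our disposal after fixing $c$: one is allowed to shrink $\eps_1$ depending on $c$. Shrinking $\eps_1$ forces every nonzero $q_k$ to exceed the large threshold $\Delta=\log_d(1/\eps_1)$, which improves the conditional moment bound: redo the geometric sum to get $\mathbb{E}[d^{sq_k}\mid\mathcal{F}_k]\le 1+d^{-s'\Delta}$ for some $s'>0$, which can be made as close to $1$ as we like. Optimizing $s$ together with $\Delta$, and using that $q_k>0\Rightarrow q_k>\Delta$ so that the number of indices with $q_k\neq 0$ is at most $c K/\Delta$ on $E_K(c)$, one converts the bound into one of the form $|E_K(c)|\le \binom{K}{\le cK/\Delta}\big(d^{-\kappa\Delta}\big)^{cK/\Delta}$-type expression; choosing $\Delta$ of order $\sqrt K$ (equivalently, letting $\eps_1\to 0$ and tracking the dependence, or more cleanly choosing $\eps_1$ small but fixed and noting the bound holds \emph{for all $K$ large} so the $\sqrt K$ in the exponent is a soft upper bound on the true linear rate) yields $|E_K(c)|\le d^{-(\kappa/6)c\sqrt K}$ for all $K$ large. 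The main obstacle I anticipate is precisely this last bookkeeping step: making the dependence among $c$, $\eps_1$, $\Delta$, $s$ and the combinatorial factor $\binom{K}{cK/\Delta}$ fit together to produce exactly the exponent $(\kappa/6)c\sqrt K$ rather than merely $\Omega(cK)$ or $\Omega(c\sqrt K)$ with a worse constant; a secondary technical point is verifying rigorously that the conditional-expectation iteration is legitimate, i.e.\ that $F^{n_k}(X|\omega)$ is genuinely admissible for each cell $\omega\in\mathcal{F}_k$ (which is noted just before Proposition~\ref{prop:return0}) so that Proposition~\ref{prop:deepreturn} applies uniformly over all cells.
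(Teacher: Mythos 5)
Your overall strategy --- large deviation via a conditional moment generating function and iterated conditioning on the filtration $\mathcal{F}_0\subset\mathcal{F}_1\subset\cdots$ --- is natural and in the right spirit, and the one-step conditional bound you derive from Proposition~\ref{prop:deepreturn} (that on each cell $\omega\in\mathcal{F}_k$ one has $|\{\theta\in\omega: q_k(\theta)\ge q\}|/|\omega|\le d^{(1-q)\kappa}$ for $q>\Delta$) is correct. But there is a genuine gap in the iterated conditioning, and it is not merely a bookkeeping technicality: for the recursion
$\mathbb{E}\bigl[\prod_{k\le K} d^{sq_k}\bigr]=\mathbb{E}\bigl[\mathbb{E}[d^{sq_K}\mid\mathcal{F}_K]\prod_{k<K}d^{sq_k}\bigr]$ (and then peel off $q_{K-1}$, etc.) to be legitimate, you need $q_{k}$ to be $\mathcal{F}_{k+1}$-measurable, i.e.\ constant on each cell of $\mathcal{P}_{n_{k+1}}$. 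However, $\{q_k\ge r\}$ is determined by $|f_{n_k}(\theta,X(\theta))|<d^{-r+1}\alpha$, and on the admissible curve $F^{n_k}(X|\omega)$ this event lives at scale $\mathcal{P}_{n_k+r}$; when $r>n_{k+1}(\theta)-n_k(\theta)$ (a rare but genuine possibility, since $q_k$ is unbounded while $n_{k+1}-n_k$ can be as small as $N(\alpha)+1$) this scale is \emph{strictly finer} than $\mathcal{F}_{k+1}$, so $q_k$ is not $\mathcal{F}_{k+1}$-measurable in the range that actually matters for the tail estimate, and the product does not pull out of the conditional expectation. You flag that ``$q_k$ is not $\mathcal{F}_k$-measurable,'' but the real obstruction is this incompatibility of scales, and you do not resolve it.

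The paper's proof is designed precisely to circumvent this. It splits $E_K(c)=E_K^1(c)\cup E_K^2(c)$, handling separately the case where some $q_k\ge[\sqrt K]$ (bounded crudely by $(K+1)d^{-\kappa([\sqrt K]-1)}$), and then on $E_K^1$ where every $q_k<[\sqrt K]=:m$ it restricts attention to indices in a single arithmetic progression $\mathcal{A}_p=\{k\equiv p\ (\mathrm{mod}\ m)\}$. For $k_j<k_{j+1}$ in $\mathcal{A}_p$ one then has $n_{k_{j+1}}-n_{k_j}\ge mN(\alpha)\ge r_{k_j}$, which is exactly the inequality needed so that the sets $\Omega'_{j+1}\subset\Omega_j$ nest and the per-return estimate can be multiplied out to give $|E_{K,p}^1(c,\mathbf r)|\le d^{-\kappa\|\mathbf r\|/2}$; the final bound comes from a combinatorial count of admissible $\mathbf r$ rather than from Markov's inequality. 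This is a genuinely different mechanism from your martingale iteration, and it is what produces the $\sqrt K$ in the exponent (the arithmetic progressions have length $\approx\sqrt K$, so the multiplicative gain is of order $d^{-\kappa c\sqrt K}$ rather than $d^{-\kappa cK}$). Finally, your suggested fix of ``choosing $\Delta$ of order $\sqrt K$'' is not available: $\Delta=\log_d(1/\eps_1)$ is a constant fixed once and for all (before the ``for all $K$ large'' quantifier), so it cannot be tuned with $K$; and while your alternative reading (fixed small $\eps_1$, soft $\sqrt K$ bound from a true linear rate) would in principle suffice, you have not actually established a linear rate valid for \emph{all} $c>0$, because of the adaptedness gap above.
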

\begin{proof}
We decompose the set $$E_K(c)=E_K^1(c)\cup E_K^2 (c),$$ where
\begin{align*}
E_K^2(c)& :=\{\theta\in E_K(c):\max \limits_{0\le k \le K}
q_k(\theta) \ge
[\sqrt{K}]\},\\
E_K^1(c)& := E_K(c)\setminus E_K^2(c).
\end{align*}
 By
Proposition~\ref{prop:deepreturn},
\begin{equation}\label{eqn:ek2c}
|E_K^2(c)|\le \sum_{k=0}^K |\{\theta\in E_K(c): q_k(\theta)\ge
[\sqrt{K}]\}|\le (K+1) d ^{-\kappa([\sqrt{K}]-1)}.
\end{equation}
Let us estimate $|E_K^1 (c)|$. To this end, let $m=[\sqrt{K}]$ and
let
$$\mathcal{A}_p=\{0\le k\le K: k\equiv p \mod m\}.$$
Let
$$E_{K,p}^1(c):= \left\{\theta\in E_K^1(c): \sum_{k\in \mathcal{A}_p} q_k(\theta)\ge
\frac{cK}{m}\right\}.$$ Then
$$E_K^1(c)=\bigcup_{p=0}^{m-1} E_{K, p}^1(c).$$

Let us now fix $p$, and estimate $|E_{K, p}^1(c)|$. For each
$\textbf{r}=(r_k)_{k\in \mathcal{A}_p}\in \{0,1,\ldots, [\sqrt{K}]
\}^{\mathcal{A}_p}$, let $\|\textbf{r}\|=\sum_k r_k$, and let
$$E_{K, p}^1(c,\textbf{r})=\{\theta\in E_{K, p}^1(c):
q_k(\theta)=r_k \mbox{ for all } k\in\mathcal{A}_p\}.$$

{\bf Claim.} For each $\textbf{r}\in \{0,1,\ldots, [\sqrt{K}]
\}^{\mathcal{A}_p}$, we have
\begin{equation}\label{eqn:ekpcr}
|E_{K, p}^1(c,\textbf{r})|\le d^{-\frac{\kappa }{2} \|\textbf{r}\|}
\end{equation}

\begin{proof}[Proof of Claim.]
Let $k_1<k_2<\ldots< k_\nu$ be all the elements in $\mathcal{A}_p$
such that $r_{k_j}>0$.

For each $\theta\in E_{K, p}^1(c,\textbf{r})$, let
$\omega'_j(\theta)$ (resp. $\omega_j(\theta)$) be the element of
$\mathcal{P}_{n_{k_j}(\theta)}$ (resp.
$\mathcal{P}_{n_{k_j}(\theta)+r_{k_j}}$) which contains $\theta$.
Let $$\Omega_j':=\bigcup_{\theta\in E_{K, p}^1(c, \textbf{r})}
\omega_j'(\theta), \,\,\,\,\, \Omega_j:=\bigcup_{\theta\in E_{K,
p}^1(c, \textbf{r})} \omega_j(\theta).$$ Since
$$n_{k_{j+1}}(\theta)-n_{k_j}(\theta)-r_{k_j}\ge mN(\alpha)-r_{k_j}\ge 0,$$ holds for all
$\theta$,
$\Omega_{j+1}'\subset \Omega_j$.

For each $\omega_j'(\theta)$, since $F^{n_{k_j}}(X|\omega_j')$ is an
admissible curve, we have that
$$|f_{n_{k_j}}(\theta', X(\theta'))|\le |f_{n_{k_j}}(\theta, X(\theta))|+ A\alpha d^{-r_{k_j}}
\le (d+A)\alpha d^{-r_{k_j}}, $$ holds for all  $\theta'\in
\omega_j'(\theta)\cap \Omega_j$. By
Proposition~\ref{prop:deepreturn}, it follows that
$$\frac{\left|\omega_j'(\theta)\cap
\Omega_j\right|} {|\omega'_j(\theta)|}\le (d+A)^\kappa d^{-\kappa r_{k_j}}\le
d^{-\kappa r_{k_j}/2},$$ provided that we have chosen $\eps_1$ small
enough in \eqref{eqn:delta} so that $r_{k_j}\ge \Delta$ is large.

Thus for each $j$, $$|\Omega_j|\le d^{-\kappa r_{k_j}/2}
|\Omega_j'|,$$ hence
$$|\Omega_{j+1}|\le d^{-\kappa r_{k_{j+1}/2}} |\Omega_j|.$$
It follows that
$$|E_{K, p}^1(c,\textbf{r})|\le |\Omega_\nu|
\le d^{-\frac{\kappa}{2} \sum \limits_{k\in \mathcal{A}_p} r_{k}}.$$
\end{proof}

To complete the proof of the lemma, let us estimate the number $Q(K,
p, R)$ of $\textbf{r}=(r_k)\in \{0,1,\ldots,
[\sqrt{K}]\}^{\mathcal{A}_p}$ with $\sum_{k\in\mathcal{A}_p} r_k=R$
and with $E_{K,p}^1(c,\textbf{r})\not=\emptyset$. Clearly,
$$Q(K, p, R)\le \sum_{\nu=1}^{\min(\nu_p, [R/\Delta])}
\left(\begin{array}{ll}
\nu_p \\
\nu
\end{array}\right)
\left(
\begin{array}{ll}
R-\nu [\Delta]\\
 \nu-1
\end{array}
\right)
,
$$
where $\nu_p=\# \mathcal{A}_p\approx \sqrt{K}$. The first binomial
coefficient corresponds to the number of possible positions for
which $r_k\not=0$, and the second corresponds to the distribution of
the sum into terms. Assuming $\eps_1$ small so that $\Delta$ large,
let us prove that
\begin{equation}\label{eqn:qkpr}
Q(K, p, R)\le d^{\kappa R/4}
\end{equation}
holds for all $K$ sufficiently large and $R\ge cK/ m$.

To this end, by Stirling's formula we first observe that for each $1\le \nu\le
[R/\Delta]$,
$$\left(
\begin{array}{ll}
R-\nu [\Delta]\\
 \nu-1
\end{array}
\right) \le \left(
\begin{array}{ll}
R\\
\nu
\end{array}
\right) \le \left(
\begin{array}{ll}
 R \\
 \left[R/\Delta\right]
\end{array}
\right)\le d^{\kappa R/8}
$$
for $R$ sufficiently enough, provided that $\eps_1$ small . Thus
\begin{align*}
Q(K, p, R) \le \sum_{\nu=1}^{\nu_p} \left(\begin{array}{ll} \nu_p \\
\nu
\end{array}\right)d^{\kappa R/8}  \le 2^{\nu_p} d^{\kappa R/8}.
\end{align*}
In particular, there is a constant $C$ such that (\ref{eqn:qkpr})
holds if $R>C\sqrt{K}$.  Assume that $R\le C\sqrt{K}$. Provided that
$\Delta$ is large enough, $[R/\Delta]< \nu_p$, so
\begin{align*}
Q(K, p, R) \le d^{\kappa R/8} \sum_{\nu=1}^{[R/\Delta]}
\left(\begin{array}{ll}
\nu_p \\
\nu
\end{array}\right)
 \le d^{\kappa R/8} [R/\Delta]
\left(\begin{array}{ll}
\nu_p \\
\left[ R/\Delta\right]
\end{array}\right).
\end{align*}
Since $\nu_p\approx \sqrt{K}$ and $R\ge cK/m \approx c \sqrt{K}$,
(\ref{eqn:qkpr}) follows.

By the Claim, we obtain
\begin{align*}
|E_{K,p}^1(c)|&=\sum_{\textbf{r}\in \{0,1,\ldots, [\sqrt{K}]\}^{\mathcal{A}_p}:\,
\parallel \textbf{r}\parallel\ge \frac{cK}{m} } |E_{K, p}^1(c, \textbf{r})|
\le \sum_{\textbf{r}\in \{0,1,\ldots, [\sqrt{K}]\}^{\mathcal{A}_p}:\,
\parallel \textbf{r}\parallel\ge \frac{cK}{m} } d^{-\frac{\kappa}{2}\parallel \textbf{r}\parallel} \\
&
\le \sum_{R\ge cK/m} Q(K,p,R) d^{-\frac{\kappa}{2}R}\le \sum_{R\ge cK/m}  d^{-\kappa R/4}\le \frac{1}{1-d^{-\frac{\kappa}{4}}} d^{-\kappa c
\sqrt{K}/4}.
\end{align*}
 It follows that
\begin{equation}\label{eqn:ek1c}
|E_K^1(c)|\le \sum \limits_{p=0}^{m-1} |E_{K, p}^1(c)|\le  m \frac{1}{1-d^{-\frac{\kappa}{4}}} d^{-\kappa c\sqrt{K}/4}\le d^{-\kappa
c\sqrt{K}/5}
\end{equation}
 when $K$ large. Combining (\ref{eqn:ek2c}) and (\ref{eqn:ek1c}) completes the proof
of the lemma.
\end{proof}
\begin{proof}[Proof of Proposition~\ref{prop:key}]
By Lemma \ref{lemma:8}, fix a small constant $\eps_1>0$ so that $\sum_{K=1}^\infty |E_K(1)|$
converges. Hence, for a.e. $\theta\in [0,1)\setminus Q$, there exists
$K_0=K_0(\theta)$ such that $\theta \not\in E_K(1)$ for all $K\ge
K_0$.

Choose a large positive integer $M$ such that $\lambda'= \lambda_1^M
\eps_1 d^{-2}
>1$, where $\lambda_1>1$ is as in Proposition~\ref{prop:hatd}. By Proposition~\ref{prop:hatd}, provided that $\alpha>0$ is
small enough, the following holds: for a.e. $\theta\in [0,1)$,
$$\frac{\hat \partial f_n(\theta, X(\theta))}{\partial x}\ge \lambda_1^{\frac{Mn}{N(\alpha)}}$$
holds for all large $n$.

Now take $\theta$ with the above properties. For any $n\ge 1$, and
let $k_n$ be maximal such that $n_{k_n}(\theta)\le n$. Then $k_n\le
n/N(\alpha)$ and for all large $n$,
\begin{align*}
\left|\frac{\partial f_n(\theta, X(\theta))}{\partial x}\right|&
=\frac{\hat
\partial f_n (\theta, X(\theta))}{\partial x} \prod_{k=0}^{k_n}
\frac{|f_{n_k}(\theta,
X(\theta))|}{\alpha}\\
& \ge \frac{\hat \partial f_n(\theta, X(\theta))}{\partial x}
\prod_{k=0}^{k_n} \min \left(\frac{\eps_1}{d},
d^{-q_k(\theta)}\right)\\
& \ge \lambda_1^{nM/ N(\alpha)} \left(\frac{\eps_1}{d}\right)^{k_n+1}
d^{-\sum \limits_{k=0}^{k_n} q_k(\theta)}.
\end{align*}
We may assume that $k_n\to\infty$, for otherwise, $\chi_{-}(\theta,
X(\theta))>0$  obviously holds.  For $n$ large, $k_n\ge K_0$, hence
$$\sum_{k=0}^{k_n} q_k(\theta)\le k_n.$$
It follows that for $n$ large,
$$\left|\frac{ \partial f_n(\theta, X(\theta))}{\partial x}\right|\ge \lambda_1^{nM/N(\alpha)} \eps_1
^{k_n+1} d^{- (2 k_n+1) } \ge \frac{\eps_1}{d}
\left(\lambda_1^{M}\eps_1 d^{-2}\right)
^{n/N(\alpha)}=\frac{\eps_1}{d} (\lambda')^{n/N(\alpha)}.$$
Therefore $\chi_{-}(\theta, X(\theta))>0$.
\end{proof}

{\bf Acknowledgments}.
The first author was partially supported by NSFC (No. 11225105), Fok Ying Tung Education Foundation and the Fundamental
Research Funds for the Central Universities WK0010000014. The second author is partially supported by Grant C-146-000-032-001 from National University of Singapore. Both authors thank the anonymous referee for his careful reading and valuable comments.


\begin{thebibliography}{9}
\bibitem{A}
J. F. Alves. {\em SRB measures for non-hyperbolic systems with
multidimensional expansion.} Ann. Sci. \'Ecole. Norm. Sup., {\bf 33}(2000), 1-32.
\bibitem{Alves}
J. F. Alves. {\em A survey of recent results on some statistical
features of non-uniformly expanding maps.} Disc. Conti. Dynam.
Syst., {\bf 15}(2006), 1-20.
\bibitem{ABV}
J. F. Alves; C. Bonatti; M. Viana. {\em SRM measures for partially
hyperbolic systems whose central direction is mostly expanding.}
Invent. Math., {\bf 140}(2000), 351-398.
\bibitem{AV}
J. F. Alves; M. Viana. {\em Statistically stability for robust
classes of maps with non-uniform expansion.} Ergod. Theor. Dynam. Syst., {\bf
22}(2002),1-32.
\bibitem{BST}
J. Buzzi, O. Sester and M. Tsujii. {\em Weakly expanding
skew-product of quadratic maps.} Ergod. Theor. Dynam. Syst., {\bf
23}(2003),1401-1414.
\bibitem{GS}
R. Gao and W. Shen. {\em Analytic skew-products of quadratic polynomials over Misiurewicz-Thurston maps.} arXiv:1207.2702.
\bibitem{Sch}
D. Schnellmann. {\em Non-continuous weakly expanding skew-products
of quadratic maps with two positive Lyapunov exponents.}
Ergod. Theor. Dynam. Syst., {\bf 28}(2008), no. 1, 245-266.

\bibitem{Sch2} D. Schnellmann, Positive Lyapunov exponents for quadratic skew-products over a Misiurewicz-Thurston map. Nonlinearity 22 (2009), no. 11, 2681-2695.
\bibitem{V}
M. Viana. {\em Multidimensional non-hyperbolic attractors.} Publ.
IHES., {\bf 85}(1997), 63-96.

\end{thebibliography}
\end{document}